\DeclareMathOperator{\arcosh}{\mathrm{arcosh}}
\numberwithin{equation}{section}
\newtheorem{thm}{Theorem}[section]
  \theoremstyle{plain}
  \newtheorem{lem}[thm]{Lemma}
  \theoremstyle{plain}
  \newtheorem{prop}[thm]{Proposition}
  \theoremstyle{plain}
  \theoremstyle{plain}
    \theoremstyle{definition}
\newtheorem{rem}[thm]{Remark}
\newcommand{\real}{{\mathbb R}}
\title[On the Moser-Trudinger inequality in Sobolev-Slobodeckij spaces]{On the Moser-Trudinger inequality in fractional Sobolev-Slobodeckij spaces}
\author[E. Parini]{Enea Parini}
\author[B. Ruf]{Bernhard Ruf}
\address[E. Parini]{Aix-Marseille Universit\'e, CNRS, Centrale Marseille, I2M, UMR 7373, 39 Rue Frederic Joliot Curie, 13453 Marseille, France}
\email{enea.parini@univ-amu.fr}
\address[B. Ruf]{Dipartimento di Matematica "Federigo Enriques", Universit\`{a} degli Studi di Milano, via Saldini 50, 20133 Milano, Italy.} \email{bernhard.ruf@unimi.it}
\thanks{}
\subjclass[2010]{}
\keywords{}
\thanks{}
\begin{document}

\begin{abstract}
We consider the problem of finding the optimal exponent in the Moser-Trudinger inequality
\[ \sup \left\{\int_\Omega \exp{\left(\alpha\,|u|^{\frac{N}{N-s}}\right)}\,\bigg|\,u \in \widetilde{W}^{s,p}_0(\Omega),\,[u]_{W^{s,p}(\real^N)}\leq 1 \right\}< + \infty.\]
Here $\Omega$ is a bounded domain of $\real^N$ ($N\geq 2$), $s \in (0,1)$, $sp = N$, $\widetilde{W}^{s,p}_0(\Omega)$ is a Sobolev-Slobodeckij space, and $[\cdot]_{W^{s,p}(\real^N)}$ is the associated Gagliardo seminorm. We exhibit an explicit exponent $\alpha^*_{s,N}>0$, which does not depend on $\Omega$, such that the Moser-Trudinger inequality does not hold true for $\alpha \in (\alpha^*_{s,N},+\infty)$.
\end{abstract}

\maketitle

\section{Introduction}

Let $\Omega \subset \real^N$ be a bounded, open domain with Lipschitz boundary. It is well known that
\[ W^{1,N}_0(\Omega) \hookrightarrow L^p(\Omega) \]
for every $p \in [1,+\infty)$, but
\[ W^{1,N}_0(\Omega) \not\hookrightarrow L^\infty(\Omega). \]
A counterexample is given by the function $u(x)=(-\ln{|\ln {|x|} |})_+$, when $\Omega$ is the unit ball. A celebrated result by Trudinger \cite{trudinger} and Moser \cite{moser} states that functions in $W^{1,N}_0(\Omega)$ enjoy summability of exponential type: more precisely, Trudinger proved that there exists $\alpha > 0$ such that
\begin{equation} \label{MT} \sup \left\{\int_\Omega \exp{\left(\alpha\,|u|^{\frac{N}{N-1}}\right)}\,\bigg|\,u \in W^{1,N}_0(\Omega),\,\|\nabla u\|_{L^N}\leq 1 \right\}< + \infty .\end{equation}
Several years later, Moser was able to simplify Trudinger's proof, and to determine the optimal exponent $\alpha_N$ such that \eqref{MT} holds for every $\alpha \in [0,\alpha_N]$, and fails for $\alpha \in (\alpha_N, + \infty)$. This optimal exponent is given by $\alpha_N = N(N\omega_N)^{\frac{1}{N-1}}$, where $\omega_N$ is the volume of the $N$-dimensional unit ball. In particular, for $N=2$, $\alpha_2=4\pi$. While the proof of the validity of \eqref{MT} for $\alpha \in [0,\alpha_N)$ is not difficult, the very delicate point is to prove that it holds also for $\alpha=\alpha_N$. This is done by showing, after reducing by symmetrization to the case where $\Omega$ is the unit ball, that if $\{v_k\}$ is a maximizing sequence, it can not be ``too far'' from the so-called Moser sequence given by
\begin{equation} \label{maxsequence0} u_k(x)=\left\{\begin{array}{c l} |\ln k|^{\frac{N-1}{N}} & \text{if }|x| \leq \frac{1}{k} \\ \displaystyle \frac{|\ln{|x|}|}{|\ln{k}|^{\frac{1}{N}}} & \text{if }\frac{1}{k} < |x| < 1 \\ 0 & \text{if } |x|= 1 \end{array}\right.\end{equation}
The same sequence is also used to prove the failure of \eqref{MT} for $\alpha \in (\alpha_N, +\infty)$. Subsequently, Adams \cite{adams} was able to extend the results to higher order Sobolev spaces $W^{k,p}_0(\Omega)$ with $kp = N$. His proof is based on expressing functions belonging to the space as Riesz potentials of their gradients of order $k$. This approach can be extended to Sobolev spaces of fractional orders. In \cite[Theorem 3.1 (ii)]{xiaozhai}, the authors prove a fractional Moser-Trudinger inequality of the kind
\[ \int_\Omega \text{exp}\left(\alpha \, |u|^{\frac{N}{N-s}}\right) \leq F_{p,N}\,|\Omega| \]
for every function such that $(-\Delta)^{\frac{s}{2}} u \subset \subset \Omega$ and $(-\Delta)^{\frac{s}{2}} u \in L^p(\Omega)$ with $\|(-\Delta)^{\frac{s}{2}} u\|_p \leq 1$. However, due to the nonlocality of the fractional Laplacian, this is not the same as requiring that the function has compact support in $\Omega$. A result in this direction has been given recently in \cite{martinazzi}. However, the case of the Sobolev-Slobodeckij spaces $W^{s,p}_0(\Omega)$ or ${\widetilde W}^{s,p}_0(\Omega)$, whose definitions are given in the following section, has received considerably less attention. The existence of a $\alpha_* > 0$ such that the corresponding version of \eqref{MT} is satisfied for $\alpha \in (0,\alpha_*)$ is essentially proved in \cite{peetre}, as we point out in Theorem \ref{validity}. However, the value of the optimal exponent is not known. In this paper, using a slightly modified version of the sequence \eqref{maxsequence0}, we give an explicit exponent $\alpha^*_{s,N}$ such that the Moser-
Trudinger inequality does not hold true for $\alpha \in (\alpha^*_{s,N},
+\infty)$. Here the precise 
statement of the main result:
\begin{thm} 
Let $\Omega$ be a bounded, open domain of $\real^N$ ($N\geq 2$) with Lipschitz boundary, and let $s \in (0,1)$, $sp = N$. Let $\widetilde{W}^{s,p}_0(\Omega)$ be the space defined as the completion of $C^\infty_c(\Omega)$ with respect to the norm
\[ u \mapsto \left( \|u\|_{L^p(\Omega)}^p + [u]_{W^{s,p}(\real^N)}^p \right)^\frac{1}{p},\]
where
\[ [u]_{W^{s,p}(\real^N)} := \int_{\real^N} \int_{\real^N} \frac{|u(x)-u(y)|^p}{|x-y|^{N+sp}}\,dx\,dy. \]
Then there exists $\alpha_* = \alpha_*(s,\Omega)> 0$ such that
\[ \sup \left\{\int_\Omega \exp{\left(\alpha\,|u|^{\frac{N}{N-s}}\right)}\,\bigg|\,u \in \widetilde{W}^{s,p}_0(\Omega),\,[u]_{W^{s,p}(\real^N)}\leq 1 \right\}< + \infty \qquad \text{for } \alpha \in [0,\alpha_*).\]
Moreover,
\[ \sup \left\{\int_\Omega \exp{\left(\alpha\,|u|^{\frac{N}{N-s}}\right)}\,\bigg|\,u \in \widetilde{W}^{s,p}_0(\Omega),\,[u]_{W^{s,p}(\real^N)}\leq 1 \right\} = + \infty \qquad \text{for } \alpha \in (\alpha^*_{s,N}, +\infty),\] where
\[ \alpha^*_{s,N} := N\left(\frac{2\,(N\omega_N)^2\,\Gamma(p+1)}{N!}\,\sum_{k=0}^\infty \frac{(N+k-1)!}{k!}\frac{1}{(N+2k)^p}\right)^\frac{s}{N-s}.\]
\end{thm}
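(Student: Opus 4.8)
The plan is to treat the two assertions separately. The validity for $\alpha\in[0,\alpha_*)$ is not the difficult part: it is an immediate consequence of the embedding recorded in Theorem~\ref{validity} (essentially the estimate of \cite{peetre}), so I would simply quote it. Everything else concerns the failure for $\alpha>\alpha^*_{s,N}$, and here I would first reduce to a ball. The Gagliardo seminorm $[\,\cdot\,]_{W^{s,p}(\real^N)}$ is invariant under the dilation $u\mapsto u(\cdot/\lambda)$, while $\int\exp(\alpha|u|^{N/(N-s)})$ only acquires a harmless factor $\lambda^N$; moreover, extending by zero maps $\widetilde{W}^{s,p}_0(B)$ into $\widetilde{W}^{s,p}_0(\Omega)$ without changing the seminorm, for any ball $B\subset\Omega$, and only enlarges the region of integration. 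It therefore suffices to exhibit a sequence of unbounded energy on the unit ball, and I would fix $\Omega=B_1$ from now on.

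Next I would use the fractional analogue of the Moser sequence \eqref{maxsequence0},
\[ u_k(x)=\begin{cases} |\ln k|^{\frac{N-s}{N}} & |x|\le\frac1k,\\ \frac{|\ln|x||}{|\ln k|^{\frac{s}{N}}} & \frac1k<|x|<1,\\ 0 & |x|\ge1, \end{cases} \]
the exponents being arranged so that $|u_k|^{N/(N-s)}\equiv\ln k$ on the cap $\{|x|\le1/k\}$. The crux of the argument is the sharp asymptotics
\[ [u_k]_{W^{s,p}(\real^N)}^p\ \longrightarrow\ C_{s,N}:=\frac{2\,(N\omega_N)^2\,\Gamma(p+1)}{N!}\sum_{j=0}^\infty\frac{(N+j-1)!}{j!}\,\frac{1}{(N+2j)^p}\qquad(k\to\infty), \]
for then $\alpha^*_{s,N}=N\,C_{s,N}^{s/(N-s)}$ by definition. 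Granting this, set $v_k:=u_k/[u_k]_{W^{s,p}(\real^N)}$, so that $[v_k]_{W^{s,p}(\real^N)}=1$, and bound the integral from below by its contribution on the cap. Using $[u_k]_{W^{s,p}}^{N/(N-s)}=([u_k]_{W^{s,p}}^p)^{s/(N-s)}\to C_{s,N}^{s/(N-s)}$,
\[ \int_{B_1}\exp\!\Big(\alpha\,|v_k|^{\frac{N}{N-s}}\Big)\ \ge\ \omega_N\,k^{-N}\exp\!\Big(\frac{\alpha\ln k}{[u_k]_{W^{s,p}}^{N/(N-s)}}\Big)\ =\ \omega_N\,k^{\frac{\alpha}{[u_k]_{W^{s,p}}^{N/(N-s)}}-N}, \]
and the exponent tends to $\alpha\,C_{s,N}^{-s/(N-s)}-N$, which is positive exactly when $\alpha>\alpha^*_{s,N}$; thus the supremum is $+\infty$ in that range.

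To establish the asymptotics I would pass to polar coordinates $x=r\xi$, $y=\rho\eta$ and use that $N+sp=2N$ together with the homogeneity of the kernel. Writing $u_k(x)=f_k(|x|)$ and
\[ G(w):=\int_{S^{N-1}}\int_{S^{N-1}}\frac{d\sigma(\xi)\,d\sigma(\eta)}{(1+w^2-2w\,\xi\cdot\eta)^N}, \]
the substitution $\rho=rw$ (restricting to $\rho<r$ and doubling by symmetry) recasts the double integral as $2\int_0^\infty\!\int_0^1|f_k(r)-f_k(rw)|^p\,r^{-1}w^{N-1}G(w)\,dw\,dr$. On the logarithmic part the difference $f_k(r)-f_k(rw)=|\ln k|^{-s/N}\ln w$ does not depend on $r$, so the $r$-integral contributes a factor $\ln k$ that cancels the $|\ln k|^{-sp/N}=(\ln k)^{-1}$ in front, leaving in the limit $2\int_0^1|\ln w|^p w^{N-1}G(w)\,dw$. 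Expanding $G$ via the Gegenbauer generating function $(1-2w\,t+w^2)^{-N}=\sum_n C_n^N(t)\,w^n$ and integrating over both spheres gives $G(w)=N\omega_N\sum_{j\ge0}a_{2j}\,w^{2j}$ with $a_{2j}=\int_{S^{N-1}}C_{2j}^N(\xi\cdot e)\,d\sigma(\xi)$, only even powers surviving by antipodal symmetry. The radial integral $\int_0^1|\ln w|^p w^{N-1+2j}\,dw=\Gamma(p+1)(N+2j)^{-(p+1)}$ furnishes the Gamma factor, and the identity $a_{2j}=\frac{N\omega_N}{N!}\frac{(N+j-1)!}{j!}(N+2j)$ (a standard evaluation of the Gegenbauer integral against the weight $(1-t^2)^{(N-3)/2}$) supplies the remaining factor, the $N+2j$ cancelling one power to give $(N+2j)^p$ and reconstructing $C_{s,N}$ term by term; note the series converges because $p=N/s>N-1$, which is precisely the integrability of $|\ln w|^p w^{N-1}G(w)$ near $w=1$.

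I expect the principal obstacle to be the control of the \emph{error terms} in this asymptotic analysis: the contributions of pairs with one point in the inner cap $\{|x|<1/k\}$ or in the exterior $\{|x|>1\}$, and the deviation of the $r$-integral from the clean factor $\ln k$, must all be shown to be $o(\ln k)$, so that after division by $\ln k$ only the constant $C_{s,N}$ survives and the threshold is sharp. A secondary, purely computational difficulty is the exact evaluation of the Gegenbauer angular integrals $a_{2j}$, on which the closed form of $\alpha^*_{s,N}$ hinges; inserting a slight modification of the profile \eqref{maxsequence0} near $|x|=1/k$ and $|x|=1$ may be convenient to render these remainder estimates transparent, which is presumably the reason for working with a ``slightly modified'' Moser sequence.
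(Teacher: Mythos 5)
Your global strategy coincides with the paper's: quote the Peetre embedding for the validity part, reduce to the unit ball by extension and scaling, test with the modified Moser family, compute the limit of the Gagliardo seminorm, and conclude by the lower bound on the inner cap (your final estimate $\omega_N k^{\alpha/[u_k]^{N/(N-s)}-N}$ is exactly the paper's). Where you genuinely diverge is in the key computational lemma. The paper first proves, via the Almgren--Lieb heat-kernel representation, an integral of $e^{\beta\cos}$ over the sphere (modified Bessel functions), and a Legendre/hypergeometric evaluation, the closed-form radial identity
\begin{equation*}
[u]_{W^{s,p}(\real^N)}^p=(N\omega_N)^2\int_0^{\infty}\!\!\int_0^{\infty}|u(r)-u(t)|^p\,r^{N-1}t^{N-1}\,\frac{r^2+t^2}{|r^2-t^2|^{N+1}}\,dr\,dt,
\end{equation*}
and only afterwards expands the resulting one-dimensional integral into the series defining $\gamma_{s,N}$. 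You instead expand the angular kernel $G(w)$ directly by the Gegenbauer generating function and never need the closed form at all, since the final answer is itself a series; this is more elementary (no Bessel or Legendre functions) and is consistent: your claimed identity $a_{2j}=\frac{N\omega_N}{N!}\frac{(N+j-1)!}{j!}(N+2j)$ is exactly equivalent to the paper's kernel, as one sees by differentiating $\sum_k\binom{N+k-1}{k}w^{N+2k}=w^N(1-w^2)^{-N}$ term by term (the paper's ``basic calculus fact'' at $t=1$). You do, however, leave that Gegenbauer evaluation as a citation to a ``standard'' formula, whereas in the paper the analogous step is the bulk of Section 4, so some proof of it (or of the closed form of $G$) must be supplied.

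Two further points. First, the error analysis you defer is precisely the paper's $I_1,I_3,I_4\to0$ together with the integration by parts needed for the main term $I_2$ (in your variables the $r$-integral gives $|\ln\varepsilon|+\ln w$, not $|\ln\varepsilon|$, and the correction must be shown to die after division by $|\ln\varepsilon|$); these are routine but must be written out, and they all use $p>N$. Second, your stated integrability threshold $p>N-1$ near $w=1$ is a slip: the integrand behaves like $(1-w)^{p-N-1}$, so the correct condition is $p>N$, which fortunately holds since $p=N/s$ and $s<1$. Neither issue affects the correctness of the outline.
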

The paper is structured as follows. After stating some preliminary results about fractional Sobolev spaces and special functions, in Section 3 we prove the validity of the Moser-Trudinger inequality for some exponent $\alpha > 0$. In Section 4 we establish a formula for the Gagliardo seminorm of a radially symmetric function, which will be needed in Section 5 to prove the failure of the Moser-Trudinger inequality when the exponent is too big. We conclude with some final remarks and open questions.

The authors would like to thank Lorenzo Brasco for some useful and interesting discussions.

\section{Notation and preliminary results}

The $N$-dimensional volume of a unit ball in $\real^N$ will be denoted by $\omega_N$. It is then well known that the $(N-1)$-dimensional surface measure $S_{N-1}$ of a $N$-sphere is equal to $N\omega_N$. $\omega_N$ has the explicit expression
\[ \omega_N = \frac{\pi^{\frac{N}{2}}}{\Gamma\left(\frac{N}{2}+1\right)}.\]

\subsection{Sobolev-Slobodeckij spaces}

Let $N \geq 2$, and let $\Omega \subset \real^N$ be a bounded, open set with Lipschitz boundary. For $s\in (0,1)$, $p\in [1,+\infty)$, we define the quantities
\[ [u]_{W^{s,p}(\Omega)} := \left( \int_\Omega \int_\Omega \frac{|u(x)-u(y)|^p}{|x-y|^{N+sp}}\,dx\,dy \right)^{\frac{1}{p}}, \]
\[ [u]_{W^{s,p}(\real^N)} := \left( \int_{\real^N} \int_{\real^N} \frac{|u(x)-u(y)|^p}{|x-y|^{N+sp}}\,dx\,dy \right)^{\frac{1}{p}}. \]
The \emph{Sobolev-Slobodeckij space} $W^{s,p}(\Omega)$ is defined as
\[ W^{s,p}(\Omega) := \left\{ u \in L^p(\Omega)\,|\, [u]_{W^{s,p}(\Omega)}< + \infty \right\}\]
which is a Banach space when endowed with the norm
\[ \|u\|_{W^{s,p}(\Omega)} := \left(\|u\|_{L^p(\Omega)}^p + [u]_{W^{s,p}(\Omega)}^p\right)^{\frac{1}{p}}. \]
Moreover, let us define the spaces
\[ W^{s,p}_0(\Omega) = \overline{C^\infty_0(\Omega)}^{\|\cdot\|_{W^{s,p}(\Omega)}},\]
\[ \widetilde{W}^{s,p}_0(\Omega) = \overline{C^\infty_0(\Omega)}^{\|\cdot\|_{W^{s,p}(\real^N)}}.\]
The space $\widetilde W^{s,p}_0(\Omega)$ can be equivalently defined by taking the completion of $C^\infty_0(\Omega)$ with respect to the seminorm $[u]_{W^{s,p}(\real^N)}$ (see for example \cite[Remark 2.5]{brascolindgrenparini}). If $\partial\Omega$ is Lipschitz, it holds
\[ \widetilde{W}^{s,p}_0(\Omega)= \left\{u\in L^p(\real^N)\, :\,u \equiv 0 \text{ in }\real^N \setminus \Omega,\, \|u\|_{W^{s,p}(\real^N)}<+\infty\right\},\]
(see \cite[Proposition B.1]{brascoparinisquassina}). If moreover $s\,p\not =1$, then
\[ W^{s,p}_0 (\Omega) = \widetilde{W}^{s,p}_0(\Omega) \]
(see \cite[Proposition B.1]{brascolindgrenparini}). In the limit $s \to 1^-$, one recovers the usual Sobolev space $W^{1,p}_0(\Omega)$. In particular, for $u \in W^{1,p}_0(\Omega)$,
\begin{equation} \label{essea1} \lim_{s \to 1^-} (1-s)[u]_{W^{s,p}(\Omega)}^p = \lim_{s \to 1^-} (1-s)[u]_{W^{s,p}(\real^N)}^p = K(p,N)\|\nabla u\|_{L^p(\Omega)}^p,\end{equation}
where
\[ K(p,N) = \frac{1}{p}\int_{S^{N-1}} |\langle \sigma,\mathbf{e}\rangle|^p\,d\mathcal{H}^{N-1}(\sigma)\]
(see \cite{bourgainbrezismironescu} and \cite[Proposition 2.8]{brascoparinisquassina}). The Sobolev-Slobodeckij spaces can also be defined equivalently by means of interpolation theory, as in \cite{peetre}. It is important to observe that the Sobolev-Slobodeckij spaces are in general different from the Bessel potential spaces $H^{s,p}$. Indeed, if $\Omega = \real^N$, the latter is defined as
\[ H^{s,p}(\real^N) := \left\{ u \in L^1_{loc}(\real^N)\,|\,(I-\Delta )^{\frac{s}{2}}u \in L^p(\real^N) \right\}\]
or, in terms of Fourier transform,
\[ H^{s,p}(\real^N) := \left\{ u \in L^1_{loc}(\real^N)\,|\,\mathcal{F}^{-1} (1+|\xi|^2)^{\frac{s}{2}} \mathcal{F}u \in L^p(\real^N) \right\}\]
(see \cite[Theorem 3.7]{kurokawa}). This space coincides with the Triebel-Lizorkin space $F^s_{p,2}(\real^N)$ \cite[Section 2.5.6]{triebel}. On the other hand, $W^{s,p}(\real^N) = B^s_{p,p}(\real^N)$, where $B^s_{p,p}(\real^N)$ is a Besov space, and $B^s_{p,p}(\real^N) = F^s_{p,p}(\real^N)$ \cite[Section 2.3.9]{triebel}. However, $F^s_{p,p}(\real^N) \neq F^s_{p,2}(\real^N)$ if $p \neq 2$ \cite[Section 2.5.6]{triebel}.

\subsection{Special functions}

In the following we will recall the definitions and the properties of some special functions which will be needed throughout the paper.

\subsubsection{Gamma function} The Gamma function is defined on $\mathbb{C}\setminus \{0,-1,-2...\}$, and for $Re(z)>0$ it has the integral representation
\[ \Gamma(z) := \int_0^{+\infty} t^{z-1}e^{-t}\,dt.\]
It is well known that $\Gamma(z+1)=z\Gamma(z)$ for every $z$ in the domain of definition, and $\Gamma(n+1)=n!$ for $n \in \mathbb{N}$. 

\subsubsection{Beta function} The Beta function is defined on $\{ (x,y)\in \mathbb{C}\times \mathbb{C}\,|\, Re(x)>0,\,Re(y)>0\}$. It has the integral representation
\[ B(x,y) := \int_0^1 t^{x-1}(1-t)^{y-1}\,dt.\]
It is well known that
\begin{equation} \label{betagamma} B(x,y) = \frac{\Gamma(x)\,\Gamma(y)}{\Gamma(x+y)}. \end{equation}

\subsubsection{Hypergeometric function}

The hypergeometric function is defined for $|z|<1$ by the power series
\[ {_{2} F}_1(a,b;c;z) = \sum_{n=0}^\infty \frac{(a)_n\,(b)_n}{(c)_n}\,\frac{z^n}{n!}, \]
where $(q)_n = q(q+1)...(q+n-1)$ if $n>0$, and $(q)_0=1$. We will need the following result.
\begin{lem} \label{hyper}
Let ${_{2} F}_1$ be the hypergeometric function. Then,
\[ {_{2} F}_1 \left(\frac{N}{2},\frac{N+1}{2};\frac{N}{2};z\right) = (1-z)^{-(N+1)/2}, \]
\end{lem}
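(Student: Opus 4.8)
The plan is to exploit the coincidence of the first and third parameters. Writing out the defining power series,
\[ {_{2} F}_1\left(\frac{N}{2},\frac{N+1}{2};\frac{N}{2};z\right) = \sum_{n=0}^\infty \frac{\left(\frac{N}{2}\right)_n\,\left(\frac{N+1}{2}\right)_n}{\left(\frac{N}{2}\right)_n}\,\frac{z^n}{n!}, \]
I observe that the Pochhammer factor $\left(\frac{N}{2}\right)_n$ occurs both in the numerator and in the denominator, and hence cancels term by term. This collapses the ${_2F_1}$ to the single-parameter series $\sum_{n=0}^\infty \left(\frac{N+1}{2}\right)_n\,z^n/n!$.

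The second step is to identify this series with a binomial expansion. The plan is to use the generalized binomial theorem, valid for $|z|<1$ and any real $\alpha$: one has $(1-z)^{-\alpha} = \sum_{n=0}^\infty \binom{-\alpha}{n}(-z)^n$, and since $\binom{-\alpha}{n}(-1)^n = (\alpha)_n/n!$, this can be rewritten as $(1-z)^{-\alpha} = \sum_{n=0}^\infty (\alpha)_n\,z^n/n!$. Choosing $\alpha = \frac{N+1}{2}$ reproduces exactly the collapsed series, which yields the claimed identity on $|z|<1$.

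The argument is essentially a bookkeeping computation, so I do not expect a genuine obstacle; the only point deserving a word of care is convergence, since both the hypergeometric series and the binomial series converge only for $|z|<1$, which is precisely the regime in which the lemma is stated. As an independent check one could instead verify that $f(z)=(1-z)^{-(N+1)/2}$ solves the first-order linear ODE $(1-z)\,f'(z) = \frac{N+1}{2}\,f(z)$ with $f(0)=1$, and confirm that the collapsed power series satisfies the same initial value problem; but the term-by-term cancellation followed by the binomial identity is the most direct route and the one I would present.
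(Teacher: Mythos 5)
Your proof is correct, and it takes a genuinely different and more elementary route than the paper. You cancel the Pochhammer factor $\left(\tfrac{N}{2}\right)_n$ (which is legitimate since $\tfrac{N}{2}>0$, so it never vanishes), reducing the ${_2F_1}$ to the confluent-type series $\sum_{n\ge 0}\left(\tfrac{N+1}{2}\right)_n z^n/n!$, and then invoke the generalized binomial theorem; every step is a term-by-term identity valid on $|z|<1$, exactly the domain where the hypergeometric series is defined. The paper instead applies the Euler transformation ${_2F_1}(a,b;c;z)=(1-z)^{c-a-b}\,{_2F_1}(c-a,c-b;c;z)$ to reduce the claim to showing ${_2F_1}\left(0,-\tfrac12;\tfrac{N}{2};z\right)=1$, and then evaluates that via the Euler integral representation and the Beta--Gamma relation. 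Your approach buys simplicity and robustness: the paper's final step as written involves the integral $\int_0^1 t^{-3/2}(1-t)^{(N-1)/2}\,dt$, i.e.\ $B\left(-\tfrac12,\tfrac{N+1}{2}\right)$ with a parameter of negative real part, where the integral representation is not literally convergent (the conclusion is nonetheless trivially true, since $(0)_n=0$ for $n\ge 1$ forces the series to collapse to its constant term). Your direct cancellation avoids any such delicacy, so I would keep your version as the primary argument.
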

\begin{proof}
From the relation
\[{_{2} F}_1(a,b;c,z) = {_{2} F}_1 (c-a,c-b;c,z)\cdot (1-z)^{-(a+b-c)}\]
\cite[Proposition 15.3.3]{abramowitzstegun} we obtain
\[{_{2} F}_1 \left(\frac{N}{2},\frac{N+1}{2};\frac{N}{2};z\right) = {_{2} F}_1 \left(0,-\frac{1}{2};\frac{N}{2},z\right)\cdot(1-z)^{-(N+1)/2}.\]
Since $N>1$, by \cite[Proposition 15.3.1]{abramowitzstegun} and relation \eqref{betagamma} we can write
\[{_{2} F}_1 \left(0,-\frac{1}{2};\frac{N}{2},z\right) = \frac{\Gamma\left(\frac{N}{2}\right)}{\Gamma\left(-\frac{1}{2}\right)\Gamma\left(\frac{N+1}{2}\right)}\int_0^1 t^{-\frac{3}{2}}(1-t)^{\frac{N-1}{2}} = \frac{\Gamma\left(\frac{N}{2}\right)}{\Gamma\left(-\frac{1}{2}\right)\Gamma\left(\frac{N+1}{2}\right)} B\left(-\frac{1}{2},\frac{N+1}{2}\right)=1.\]
\end{proof}

\subsubsection{Associated Legendre functions} The associated Legendre function of the second kind is defined as
\[ Q_{\nu}^{\mu}(z)= e^{i\mu\pi}\,\frac{\sqrt{\pi}\,\Gamma(\mu+\nu+1)}{2^{\nu+1}\,\Gamma(\nu+3/2)}\,\frac{(z^2-1)^{\mu/2}}{z^{\mu+\nu+1}} {_{2} F}_1 \left(1+\frac{\mu}{2}+\frac{\nu}{2},\frac{1}{2}+\frac{\mu}{2}+\frac{\nu}{2};\nu+\frac{3}{2};\frac{1}{z^2}\right) \qquad \text{if }|z|>1 \]
(see \cite[8.1.3]{abramowitzstegun}).

\subsubsection{Modified Bessel functions} For $\alpha \in \real$, the modified Bessel function $I_\alpha$ of order $\alpha$ is the solution to the differential equation
\[ z^2\,y''(z)+z\,y'(z)-(z^2+\alpha^2)\,y(z)=0 \]
given by the series development
\[ I_\alpha(z) = \sum_{k=0}^{\infty} \frac{1}{k!\,\Gamma(\alpha + k+1)}\left(\frac{z}{2}\right)^{2k+\alpha}\]
(see \cite[9.6.10]{abramowitzstegun}).

\subsubsection{Zeta function} The Riemann zeta function is the analytic continuation of the function defined, for $Re(s)>1$, by the series
\[ \zeta(s) = \sum_{k=1}^\infty \frac{1}{k^s}. \]
The Riemann zeta function is meromorphic with a single pole at $s=1$, and can be written as the Laurent series
\[ \zeta(s) = \frac{1}{s-1} + \sum_{k=0}^\infty \frac{(-1)^k}{k!}\gamma_k(s-1)^k, \]
where $\gamma_k$ are the Stieltjes constants (see \cite[23.2.5]{abramowitzstegun}). This implies that
\begin{equation} \label{limitezeta} \lim_{s \to 1} (s-1)\zeta(s) = 1.\end{equation}
The Hurwitz zeta function is defined, for $Re(s)>1$ and $Re(q)>0$, as
\[ \zeta(s,q) = \sum_{k=0}^\infty \frac{1}{(q+k)^s}.\]
Clearly, $\zeta(s,1)=\zeta(s)$. Moreover, it holds
\begin{equation} \label{hurwitz}  \zeta\left(s,\frac{1}{2}\right) = (2^s-1)\zeta(s).\end{equation}
(see \cite[p. 41]{ivic}).

\section{Validity of Moser-Trudinger inequality}

In this section we give a short proof of the validity of the Moser-Trudinger inequality for some value of $\alpha >0$, which is essentially contained in \cite{peetre}, following the approach by Trudinger.

\begin{prop} \label{validity}
Let $\Omega$ be a bounded, open domain of $\real^N$, and let $sp = N$. There exists $\alpha_* = \alpha_*(s,\Omega)> 0$ such that
\[ \sup \left\{\int_\Omega \exp{\left(\alpha\,|u|^{\frac{N}{N-s}}\right)}\,\bigg|\,u \in \widetilde{W}^{s,p}_0(\Omega),\,[u]_{W^{s,p}(\real^N)}\leq 1 \right\}< + \infty \qquad \text{for } \alpha \in [0,\alpha_*).\]
\end{prop}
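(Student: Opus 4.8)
The plan is to prove the statement via a fractional analogue of Trudinger's original argument, exploiting the Sobolev embedding theorem for the critical exponent combined with a series expansion of the exponential. The key observation is that in the borderline case $sp = N$, the space $\widetilde{W}^{s,p}_0(\Omega)$ embeds continuously into $L^q(\Omega)$ for \emph{every} $q \in [1,+\infty)$, and the essential point is to control how the embedding constant grows with $q$.

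First I would recall the subcritical fractional Sobolev inequality: for $sp < N$ one has $\widetilde{W}^{s,p}_0(\Omega) \hookrightarrow L^{p^*}(\Omega)$ with $p^* = \frac{Np}{N-sp}$, but since we are exactly at $sp = N$, I would instead use that for every $q \in [p,+\infty)$ there is a constant $C$ with $\|u\|_{L^q(\Omega)} \leq C\, q^{\beta}\, [u]_{W^{s,p}(\real^N)}$ for an appropriate power $\beta$; here $\beta = \frac{N-s}{N}$ is the critical growth rate that makes the exponential series converge. This is precisely the estimate whose proof is ``essentially contained in'' \cite{peetre} through interpolation theory, so I would either cite the interpolation characterization of $W^{s,p} = B^s_{p,p}$ mentioned in the preliminaries, or reproduce the embedding with explicit track of the $q$-dependence. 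Assuming $[u]_{W^{s,p}(\real^N)} \leq 1$, this gives $\|u\|_{L^q(\Omega)}^q \leq (C q^{\beta})^q$.

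Next I would expand the exponential as a power series and integrate term by term, using monotone convergence:
\[
\int_\Omega \exp\left(\alpha\,|u|^{\frac{N}{N-s}}\right)\,dx = \sum_{k=0}^\infty \frac{\alpha^k}{k!}\int_\Omega |u|^{\frac{kN}{N-s}}\,dx.
\]
Setting $q = \frac{kN}{N-s}$ in the embedding estimate, the $k$-th term is bounded by $\frac{\alpha^k}{k!}\,(C q^{\beta})^{q}$, and with $\beta = \frac{N-s}{N}$ the factor $q^{\beta q} = \left(\frac{kN}{N-s}\right)^k$ behaves like $k^k$ up to a constant to the $k$-th power. Using the Stirling-type bound $k! \geq (k/e)^k$, the general term is dominated by $\left(\alpha\, C'\right)^k$ for a suitable constant $C'$, so the series converges whenever $\alpha < \alpha_* := 1/C'$, with a bound uniform over the admissible $u$. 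Taking the supremum gives finiteness for $\alpha \in [0,\alpha_*)$.

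The main obstacle is obtaining the correct power $\beta = \frac{N-s}{N}$ in the embedding $\|u\|_{L^q} \lesssim q^{\beta}[u]_{W^{s,p}}$ with the precise $q$-dependence, rather than merely continuity for each fixed $q$: a careless embedding constant that grows too fast in $q$ would make the series diverge for every $\alpha > 0$. In the classical first-order case this sharp growth follows from iterating the Sobolev inequality (Trudinger's device) or from the Riesz potential representation (Adams' approach); in the fractional Sobolev-Slobodeckij setting the cleanest route is the Besov-space interpolation framework of \cite{peetre}, where the sharp constant growth is available. I would therefore locate the precise statement in \cite{peetre} giving $L^q$ bounds with controlled constants and verify that it yields exactly the exponent $\frac{N-s}{N}$ matching the exponential nonlinearity $|u|^{N/(N-s)}$; everything downstream is the routine series estimate above.
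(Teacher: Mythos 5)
Your proposal is essentially identical to the paper's proof: it invokes \cite[Theorem 9.1]{peetre} for the embedding $\|u\|_{L^q(\Omega)} \leq C\,q^{\frac{N-s}{N}}[u]_{W^{s,p}(\real^N)}$, expands the exponential as a series, and concludes via Stirling's formula that the series converges for small $\alpha$. The only detail you gloss over is that the exponents $q = \frac{kN}{N-s}$ can be smaller than $p$ for small $k$; the paper covers the range $1 \leq q \leq p$ in one line via H\"older's inequality and the Sobolev embedding, and this is all that is missing from your argument.
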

\begin{proof}
By \cite[Theorem 9.1]{peetre} we have
\[ \sup_{q>p} \left(\sup_{u \in \widetilde{W}^{s,p}_0(\Omega)} \frac{\|u\|_{q}}{q^{\frac{N-s}{N}}\,[u]_{W^{s,p}(\real^N)}}\right) < +\infty.\]
This holds true also for $1 \leq q \leq p$, as a consequence of H\"{o}lder's inequality and Sobolev embedding. Therefore there exists a constant $C>0$, depending on $\Omega$, such that for every $u \in \widetilde{W}^{s,p}_0(\Omega)$
\[ \|u\|_{q} \leq C\,[u]_{W^{s,p}(\real^N)}\,q^{\frac{N-s}{N}}.\]
If $[u]_{W^{s,p}(\real^N)} \leq 1$, using the series development of the exponential function we have
\[ \int_\Omega \exp{\left(\alpha |u|^{\frac{N}{N-s}}\right)} = \sum_{k=0}^\infty \frac{\alpha^k}{k!} \int_\Omega |u|^{\frac{kN}{N-s}} \leq \sum_{k=0}^\infty \frac{1}{k!} \left(\frac{CN}{N-s}\alpha k\right)^{k}.\]
The last series is convergent for $\alpha$ small enough thanks to Stirling's formula
\[ k! = \sqrt{2\pi k} \left(\frac{k}{e}\right)^k \left(1+O\left(\frac{1}{k}\right)\right),\] 
and it gives a uniform bound for
\[ \int_\Omega \exp{\left(\alpha |u|^{\frac{N}{N-s}}\right)} \]
not depending on the function $u$.
\end{proof}

\begin{prop}
Let $\Omega$ be a bounded, open domain of $\real^N$, and let $sp = N$. If $u \in \widetilde{W}^{s,p}_0(\Omega)$, for every $\alpha > 0$ it holds
\[ \exp{\left(\alpha\,|u|^{\frac{N}{N-s}}\right)} \in L^1(\Omega). \]
\end{prop}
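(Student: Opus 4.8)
The plan is to upgrade the \emph{uniform} bound of Proposition~\ref{validity}, which is only available for the restricted exponent range $\alpha \in [0,\alpha_*)$ and for functions of seminorm at most one, into a finiteness statement valid for \emph{every} $\alpha>0$ applied to a single \emph{fixed} function $u$. The mechanism is a density-plus-splitting argument: by the equivalent definition of $\widetilde{W}^{s,p}_0(\Omega)$ as the completion of $C^\infty_c(\Omega)$ in the Gagliardo seminorm (recalled in Section~2), any $u$ in this space can be approximated in seminorm by a smooth compactly supported function, and the remainder can be made to have arbitrarily small seminorm. Shrinking that seminorm lets us push the effective exponent below $\alpha_*$, at the cost of an extra bounded factor coming from the smooth part.

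First I would fix $\alpha>0$ and pick $v \in C^\infty_c(\Omega)$ with $[u-v]_{W^{s,p}(\real^N)}<\varepsilon$, the value of $\varepsilon>0$ to be chosen later. Write $w:=u-v$, so that $v$ is bounded, say $|v|\le M$, while $w$ carries a small seminorm. Set $r:=\frac{N}{N-s}>1$ (indeed $r=p'$, since $sp=N$ forces $N-s=\frac{N(p-1)}{p}$). I would then invoke the elementary convexity inequality: for every $\tau>0$ there is $C_\tau>0$ with
\[ (a+b)^r \le (1+\tau)\,b^r + C_\tau\,a^r \qquad \text{for all } a,b\ge 0. \]
Taking $a=|v|$ and $b=|w|$ yields $|u|^r \le (1+\tau)\,|w|^r + C_\tau\,M^r$, and hence
\[ \exp\!\left(\alpha\,|u|^r\right) \le \exp\!\left(\alpha\,C_\tau\,M^r\right)\,\exp\!\left(\alpha(1+\tau)\,|w|^r\right). \]

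The key step is to tame the last factor by rescaling $w$ to unit seminorm. Putting $\tilde w := w/[w]_{W^{s,p}(\real^N)}$ (the case $[w]=0$ being trivial, as then $u=v$ is bounded), one has $[\tilde w]_{W^{s,p}(\real^N)}=1$ and
\[ \alpha(1+\tau)\,|w|^r = \alpha(1+\tau)\,[w]_{W^{s,p}(\real^N)}^{\,r}\,|\tilde w|^r =: \beta\,|\tilde w|^r, \qquad \beta \le \alpha(1+\tau)\,\varepsilon^r. \]
Since $\alpha$ is fixed and $\alpha_*>0$, I can first fix $\tau$ (say $\tau=1$) and then choose $\varepsilon$ small enough that $\beta<\alpha_*$; Proposition~\ref{validity} then gives $\int_\Omega \exp(\beta\,|\tilde w|^r)<+\infty$. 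Combining the two displays,
\[ \int_\Omega \exp\!\left(\alpha\,|u|^r\right) \le \exp\!\left(\alpha\,C_\tau\,M^r\right)\int_\Omega \exp\!\left(\beta\,|\tilde w|^r\right) < +\infty, \]
which is the assertion. No genuine obstacle arises here: the argument is a soft density reduction, and the only point demanding attention is that the splitting inequality be applied so that the factor $(1+\tau)$ close to $1$ multiplies the \emph{small-seminorm} term $w$, ensuring the admissible range of Proposition~\ref{validity} is reached by shrinking $\varepsilon$, while the bounded smooth part $v$ contributes merely the harmless constant $\exp(\alpha\,C_\tau\,M^r)$.
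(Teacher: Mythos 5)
Your proposal is correct and follows essentially the same route as the paper's own proof: split $u=v+w$ with $v\in C^\infty_c(\Omega)$ bounded and $[w]_{W^{s,p}(\real^N)}$ small by density, separate the exponential via a convexity inequality, absorb the $v$-part into a constant, and apply Proposition~\ref{validity} to $w$ rescaled to unit seminorm. The only (immaterial) difference is that you use the weighted inequality $(a+b)^r\le(1+\tau)b^r+C_\tau a^r$ where the paper uses the plain convexity bound with factor $2^{r-1}=2^{\frac{s}{N-s}}$; both work since $[w]$ can be made arbitrarily small.
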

\begin{proof}
For the sake of simplicity, we will write $[\,\cdot\,] = [\,\cdot\,]_{W^{s,p}(\real^N)}$. By definition of $\widetilde{W}^{s,p}_0(\Omega)$, there exists a sequence $\{u_k\}$ in $C^\infty_c(\Omega)$ such that $[u_k - u] \to 0$ as $k \to \infty$. Then it is possible to write $u := v+w$, where $v \in C^\infty_c(\Omega)$, and $[w] \leq \frac{1}{2}\left(\frac{\alpha_*}{\alpha}\right)^{\frac{N-s}{N}}$. By convexity,
\[ \exp{\left(\alpha\,|u|^{\frac{N}{N-s}}\right)} \leq \exp\left(2^{\frac{s}{N-s}}\alpha\,(|v|^{\frac{N}{N-s}}+|w|^{\frac{N}{N-s}})\right). \]
Clearly,
\[ \exp\left(2^{\frac{s}{N-s}}\alpha\,|v|^{\frac{N}{N-s}}\right) \in L^\infty(\Omega).\]
Moreover,
\begin{align*} \exp\left(2^{\frac{s}{N-s}}\alpha\,|w|^{\frac{N}{N-s}}\right) & = \exp\left(2^{\frac{s}{N-s}}\alpha[w]^{\frac{N}{N-s}}\,\left(\frac{|w|}{[w]}\right)^{\frac{N}{N-s}}\right) \\ & \leq \exp\left(\frac{\alpha_*}{2}\left(\frac{|w|}{[w]}\right)^{\frac{N}{N-s}}\right) \in L^1(\Omega)\end{align*}
by Proposition \ref{validity}.
\end{proof}

\section{Gagliardo seminorm of radially symmetric functions}

In this section we will give a formula for the Gagliardo seminorm of a radially symmetric function $u \in W^{s,p}(\real^N)$, which will be needed in the following, and which might be of independent interest. We will need a couple of technical lemmas.

\begin{lem} \label{sphere}
Let $\beta > 0$. Then
\[ \int_{S^{N-1}} e^{\beta \cos(\mathbf{e},\sigma)}\,d\mathcal{H}^{N-1}(\sigma) = 2\pi^\frac{N}{2}\,\left( \frac{2}{\beta} \right)^{\frac{N}{2}-1}\,I_{\frac{N}{2}-1}(\beta),\]
where $I_{\frac{N}{2}-1}$ is the modified Bessel function of order $\frac{N}{2}-1$.
\end{lem}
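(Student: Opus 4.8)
The plan is to reduce the integral over $S^{N-1}$ to a one-dimensional integral in the polar angle measured from $\mathbf{e}$, and then to identify that integral as (essentially) the Poisson integral representation of $I_{\frac N2-1}$. Since the excerpt only records the series definition of the modified Bessel function, I would carry out the identification directly at the level of power series, so as to keep the argument self-contained.

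\emph{Step 1 (reduction to one dimension).} Fixing $\mathbf{e}$, I would write $\sigma \in S^{N-1}$ as $\sigma = (\cos\theta)\,\mathbf{e} + (\sin\theta)\,\omega$ with $\theta \in [0,\pi]$ the angle between $\sigma$ and $\mathbf{e}$ (so $\cos(\mathbf{e},\sigma)=\cos\theta$) and $\omega$ ranging over the unit sphere $S^{N-2}$ of $\mathbf{e}^\perp$. The surface measure then factorizes as $d\mathcal{H}^{N-1}(\sigma) = (\sin\theta)^{N-2}\,d\theta\,d\mathcal{H}^{N-2}(\omega)$; since the integrand depends only on $\theta$, integrating out $\omega$ contributes the surface area $\frac{2\pi^{(N-1)/2}}{\Gamma\left(\frac{N-1}{2}\right)}$ of $S^{N-2}$, leaving
\[ \int_{S^{N-1}} e^{\beta\cos(\mathbf{e},\sigma)}\,d\mathcal{H}^{N-1}(\sigma) = \frac{2\pi^{(N-1)/2}}{\Gamma\left(\frac{N-1}{2}\right)}\int_0^\pi e^{\beta\cos\theta}\,(\sin\theta)^{N-2}\,d\theta. \]

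\emph{Step 2 (the one-dimensional integral).} As $\beta$ is fixed, the exponential series converges uniformly on $[0,\pi]$ and may be integrated term by term. The terms with odd exponent of $\cos\theta$ vanish, since $(\cos\theta)^{j}(\sin\theta)^{N-2}$ is antisymmetric about $\theta=\tfrac\pi2$ for odd $j$. For the even terms, the substitution $t=\sin^2\theta$ and the integral representation of the Beta function give
\[ \int_0^\pi (\cos\theta)^{2j}(\sin\theta)^{N-2}\,d\theta = B\left(j+\tfrac12,\tfrac{N-1}{2}\right) = \frac{\Gamma\left(j+\frac12\right)\Gamma\left(\frac{N-1}{2}\right)}{\Gamma\left(j+\frac N2\right)}, \]
where the second equality is \eqref{betagamma}.

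\emph{Step 3 (matching the Bessel series) --- the main obstacle.} This is the delicate point. I would insert $\Gamma\left(j+\frac12\right)=\frac{(2j)!}{4^j\,j!}\sqrt\pi$ (which follows from the functional equation of $\Gamma$ and $\Gamma(\tfrac12)=\sqrt\pi$); the factor $(2j)!$ then cancels the one coming from the exponential series, giving
\[ \int_0^\pi e^{\beta\cos\theta}(\sin\theta)^{N-2}\,d\theta = \sqrt\pi\,\Gamma\left(\tfrac{N-1}{2}\right)\sum_{j=0}^\infty \frac{1}{j!\,\Gamma\left(j+\frac N2\right)}\left(\frac\beta2\right)^{2j}. \]
Reading the series definition of $I_\alpha$ with $\alpha=\frac N2-1$, so that $\Gamma\left(j+\frac N2\right)=\Gamma(\alpha+j+1)$, one sees that the displayed sum equals $\left(\frac2\beta\right)^{\frac N2-1}I_{\frac N2-1}(\beta)$. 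The real work here is purely in the bookkeeping: keeping track of the shift in the Gamma arguments and of the powers of $2$ and $\beta$ so that they align exactly with the normalization in the definition of $I_\alpha$. Inserting this into the formula from Step 1, the two factors $\Gamma\left(\frac{N-1}{2}\right)$ cancel and $\pi^{(N-1)/2}\cdot\pi^{1/2}=\pi^{N/2}$, which produces the claimed identity.
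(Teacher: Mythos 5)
Your proof is correct. The skeleton is the same as the paper's: both reduce the spherical integral to the one-dimensional polar-angle integral $\int_0^\pi e^{\beta\cos\theta}(\sin\theta)^{N-2}\,d\theta$ weighted by the surface area of $S^{N-2}$, and both then normalize the constants identically. The difference lies in how that one-dimensional integral is evaluated: the paper simply cites the table entry \cite[3.915]{gradshteyn}, which gives
\[ \int_0^\pi e^{\beta\cos\theta}(\sin\theta)^{N-2}\,d\theta = \sqrt{\pi}\,\Gamma\left(\tfrac{N-1}{2}\right)\left(\tfrac{2}{\beta}\right)^{\frac N2-1}I_{\frac N2-1}(\beta), \]
whereas you reprove this identity from scratch by expanding $e^{\beta\cos\theta}$ in its power series, killing the odd terms by symmetry, evaluating the even moments via the Beta function, and resumming against the series definition of $I_{\frac N2-1}$. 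Your Steps 2--3 check out: the duplication-type identity $\Gamma\left(j+\tfrac12\right)=\frac{(2j)!}{4^j j!}\sqrt\pi$ cancels the $(2j)!$ from the exponential series exactly as claimed, and the resulting sum is $\left(\frac2\beta\right)^{\frac N2-1}I_{\frac N2-1}(\beta)$ by inspection of the series for $I_\alpha$ with $\alpha=\frac N2-1$. What your route buys is self-containedness --- the lemma no longer rests on an unproved table formula, only on the series definition of $I_\alpha$ and the Beta--Gamma relation \eqref{betagamma}, both of which the paper already records; the cost is a page of bookkeeping that the citation avoids. The only cosmetic point is in Step 2: the substitution $t=\sin^2\theta$ is not injective on $[0,\pi]$, so one should first fold the integral onto $[0,\tfrac\pi2]$ by the same symmetry used to discard the odd terms; this changes nothing in the result.
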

\begin{proof}
We can write, according to \cite[Proposition 3.915]{gradshteyn},
\begin{align*} & \int_{S^{N-1}} e^{\beta \cos(\mathbf{e},\sigma)}\,d\mathcal{H}^{N-1}(\sigma) = (N-1)\,\omega_{N-1}\,\int_0^\pi e^{\beta \cos {\theta}}\,(\sin{\theta})^{N-2}\,d\theta \\ & = (N-1)\,\omega_{N-1}\,\sqrt{\pi}\,\left( \frac{2}{\beta} \right)^{\frac{N}{2}-1}\,\Gamma\left(\frac{N-1}{2}\right)\,I_{\frac{N}{2}-1}(\beta) \\ & = 2\pi^\frac{N}{2}\,\left( \frac{2}{\beta} \right)^{\frac{N}{2}-1}\,\,I_{\frac{N}{2}-1}(\beta)\end{align*}
where we used the relations $\omega_{N-1} = \frac{S_{N-2}}{N-1}$ and $S_{N-2}= \frac{2\pi^\frac{N-1}{2}}{\Gamma\left(\frac{N-1}{2}\right)}$.
\end{proof}

\begin{lem} \label{integralbessel2}
Let $r,t \in \real^+$ be such that $r \neq t$. Then,
\[ \int_0^{+\infty} h^{\frac{N}{2}}\,e^{-h(r^2+t^2)}I_{\frac{N}{2}-1}(2rth)\,dh = \frac{\Gamma(N)}{\Gamma(N/2)}\,(rt)^{N/2-1}\,\frac{r^2 + t^2}{|r^2-t^2|^{N+1}}.
\] 
\end{lem}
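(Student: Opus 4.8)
The plan is to reduce everything to the defining power series of the modified Bessel function, integrate term by term, and recognize the resulting series through Lemma \ref{hyper}. Using the series recorded in the preliminaries with index $\frac N2-1$ and argument $2rth$, I would write $I_{\frac N2-1}(2rth)=\sum_{k=0}^\infty \frac{(rt)^{2k+\frac N2-1}}{k!\,\Gamma\!\left(\frac N2+k\right)}\,h^{2k+\frac N2-1}$, so that the integrand becomes a sum of positive multiples of $h^{2k+N-1}e^{-h(r^2+t^2)}$. Since $r,t>0$ makes every term nonnegative, Tonelli's theorem lets me interchange summation and integration, and each integral is a Gamma integral, $\int_0^\infty h^{2k+N-1}e^{-h(r^2+t^2)}\,dh=\Gamma(2k+N)\,(r^2+t^2)^{-(2k+N)}$. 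Factoring out $(rt)^{\frac N2-1}(r^2+t^2)^{-N}$ leaves me with the series
\[ \sum_{k=0}^\infty \frac{\Gamma(2k+N)}{k!\,\Gamma\!\left(\frac N2+k\right)}\,\xi^k, \qquad \xi:=\frac{r^2t^2}{(r^2+t^2)^2}. \]

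The key simplification is the Legendre duplication formula $\Gamma(2z)=\frac{2^{2z-1}}{\sqrt\pi}\Gamma(z)\Gamma\!\left(z+\tfrac12\right)$ with $z=k+\frac N2$, which turns $\frac{\Gamma(2k+N)}{\Gamma(\frac N2+k)}$ into $\frac{2^{2k+N-1}}{\sqrt\pi}\Gamma\!\left(k+\frac{N+1}{2}\right)$. The series then reads $\frac{2^{N-1}}{\sqrt\pi}\sum_{k=0}^\infty \frac{\Gamma(k+\frac{N+1}{2})}{k!}(4\xi)^k$, which is exactly $\frac{2^{N-1}}{\sqrt\pi}\,\Gamma\!\left(\frac{N+1}{2}\right)(1-4\xi)^{-\frac{N+1}{2}}$: indeed $\sum_k \frac{((N+1)/2)_k}{k!}z^k={}_2F_1\!\left(\frac N2,\frac{N+1}{2};\frac N2;z\right)$, which Lemma \ref{hyper} evaluates to $(1-z)^{-(N+1)/2}$ at $z=4\xi$. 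Here the hypothesis $r\neq t$ enters: by the arithmetic–geometric mean inequality $4\xi<1$ precisely when $r\neq t$, so the series converges and the base is nonzero. Since $1-4\xi=\frac{(r^2-t^2)^2}{(r^2+t^2)^2}$, I get $(1-4\xi)^{-\frac{N+1}{2}}=\frac{(r^2+t^2)^{N+1}}{|r^2-t^2|^{N+1}}$. Collecting the prefactors and applying the duplication formula once more, now in the form $\frac{2^{N-1}}{\sqrt\pi}\Gamma\!\left(\frac{N+1}{2}\right)=\frac{\Gamma(N)}{\Gamma(N/2)}$, produces exactly $\frac{\Gamma(N)}{\Gamma(N/2)}(rt)^{N/2-1}\frac{r^2+t^2}{|r^2-t^2|^{N+1}}$.

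The computation is mechanical once the series route is set up, so the only genuinely delicate points are justifying the term-by-term integration (handled by Tonelli via positivity) and bookkeeping the Gamma factors so that the series matches the form in Lemma \ref{hyper}; the main obstacle is really just organizing the duplication-formula manipulations cleanly. An alternative that avoids reproving the series identity is to invoke a tabulated Laplace-transform formula for $t^{\mu-1}I_\nu(at)$ (as in Gradshteyn--Ryzhik), which yields directly the Gauss hypergeometric function ${}_2F_1\!\left(\frac N2,\frac{N+1}{2};\frac N2;\frac{4r^2t^2}{(r^2+t^2)^2}\right)$ with precisely the parameters of Lemma \ref{hyper}; the associated Legendre function $Q$ recorded in the preliminaries is the natural intermediary in which such a transform is usually stated, and Lemma \ref{hyper} then collapses the hypergeometric factor to the stated rational expression.
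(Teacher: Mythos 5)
Your argument is correct, and it takes a genuinely different route from the paper. The paper invokes the tabulated Laplace transform \cite[6.622]{gradshteyn}, which expresses $\int_0^\infty x^{N/2}e^{-x\cosh\alpha}I_{N/2-1}(x)\,dx$ through the associated Legendre function $Q_{N/2-3/2}^{N/2+1/2}(\cosh\alpha)$; it then unfolds the definition of $Q$, applies Lemma \ref{hyper} to collapse the hypergeometric factor, and finishes with the substitution $\cosh\alpha=\frac{r^2+t^2}{2rt}$, $x=2rth$. You instead expand $I_{N/2-1}$ in its defining power series, integrate term by term (legitimately, by Tonelli, since every term is nonnegative), and resum using the Legendre duplication formula; I checked the bookkeeping, including the two uses of duplication and the identification $1-4\xi=\frac{(r^2-t^2)^2}{(r^2+t^2)^2}$, and it all closes up to the stated right-hand side. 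Your route is more self-contained and elementary: it bypasses the Gradshteyn--Ryzhik entry and the Legendre-function machinery entirely, and the hypothesis $r\neq t$ enters transparently as the convergence condition $4\xi<1$ of the binomial series (note that Lemma \ref{hyper} is not really needed here --- your series is just $\sum_k\frac{(a)_k}{k!}z^k=(1-z)^{-a}$ with $a=\frac{N+1}{2}$, the degenerate case $b=c$ of the hypergeometric function). What the paper's approach buys is brevity, at the cost of importing a table formula whose verification is exactly the computation you carried out; the alternative you sketch in your closing paragraph is essentially the paper's proof.
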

\begin{proof}
By \cite[Proposition 6.622]{gradshteyn} it holds
\[ \int_0^{+\infty} x^{\frac{N}{2}}\,e^{-x\,\cosh{\alpha}}I_{\frac{N}{2}-1}(x)\,dx = e^{-i\pi(N+1)/2}\,\sqrt{\frac{2}{\pi}}\,\frac{Q_{N/2-3/2}^{N/2+1/2}(\cosh{\alpha})}{(\sinh{\alpha})^{N/2+1/2}},\]
provided $\cosh{\alpha}>1$, where $Q_{N/2-3/2}^{N/2+1/2}$ is an associated Legendre function whose definition is
\[ Q_{N/2-3/2}^{N/2+1/2}(z)= e^{i\pi(N+1)/2}\,\frac{\sqrt{\pi}\,\Gamma(N)}{2^{N/2-1/2}\,\Gamma(N/2)}\,\frac{(z^2-1)^{(N+1)/4}}{z^{N}} {_{2} F}_1 \left(\frac{N}{2},\frac{N+1}{2};\frac{N}{2};\frac{1}{z^2}\right) \qquad \text{if }|z|>1. \]
If we observe that, by Lemma \ref{hyper},
\[ {_{2} F}_1 \left(\frac{N}{2},\frac{N+1}{2};\frac{N}{2};z\right) = (1-z)^{-(N+1)/2}, \]
the above integral simplifies to
\[ \int_0^{+\infty} x^{\frac{N}{2}}\,e^{-x\,\cosh{\alpha}}I_{\frac{N}{2}-1}(x)\,dx = \frac{\Gamma(N)}{2^{N/2-1}\,\Gamma(N/2)}\,\frac{\cosh{\alpha}}{(\cosh^2{\alpha}-1)^{(N+1)/2}}.\]
Setting $\alpha = \arcosh \left(\frac{r^2+t^2}{2rt} \right)$ (which is strictly positive if $r \neq t$) and using the change of variable $x= 2rth$ we obtain
\begin{align*} & \int_0^{+\infty} h^{\frac{N}{2}}\,e^{-h(r^2+t^2)}I_{\frac{N}{2}-1}(2rth)\,dh = \frac{1}{(2rt)^{N/2+1}}\,\int_0^{+\infty} x^{\frac{N}{2}}\,e^{-x\,\cosh{\alpha}}I_{\frac{N}{2}-1}(x)\,dx \\ & = \frac{1}{(2rt)^{N/2+1}}\,\frac{\Gamma(N)}{2^{N/2-1}\,\Gamma(N/2)}\,\frac{\cosh{\alpha}}{(\cosh^2{\alpha}-1)^{(N+1)/2}} = \frac{\Gamma(N)}{\Gamma(N/2)}\,(rt)^{N/2-1}\,\frac{r^2 + t^2}{|r^2-t^2|^{N+1}} .  \end{align*}
\end{proof}

We are now ready to give the proof of the main result of this section.

\begin{prop}
Let $u \in W^{s,p}(\real^N)$ be a radially symmetric function. Suppose that $sp=N$. Then,
\begin{equation} \label{integraleradiale} \int_{\real^N} \int_{\real^N} \frac{|u(x)-u(y)|^p}{|x-y|^{N+s\,p}}\,dx\,dy = (N\omega_N)^2\,\int_0^{+\infty}  \int_0^{+\infty} \,|u(r)-u(t)|^p \,r^{N-1}\,t^{N-1}\, \frac{r^2 + t^2}{|r^2-t^2|^{N+1}}\,dr\,dt.\end{equation}
\end{prop}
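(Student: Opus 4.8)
The plan is to pass to spherical coordinates and then reduce the angular part of the integral to the two technical lemmas just established. Writing $x = r\sigma$ and $y = t\tau$ with $r = |x|$, $t = |y|$, and $\sigma, \tau \in S^{N-1}$, and using that $u$ is radial so that $u(x) = u(r)$ and $u(y) = u(t)$, together with $N + sp = 2N$ (since $sp = N$), the left-hand side becomes
\[ \int_0^\infty \int_0^\infty |u(r) - u(t)|^p\, r^{N-1} t^{N-1} \left( \int_{S^{N-1}} \int_{S^{N-1}} \frac{d\sigma\,d\tau}{|r\sigma - t\tau|^{2N}} \right) dr\, dt. \]
Thus everything reduces to evaluating the inner double integral over the spheres, which I claim equals $(N\omega_N)^2 \frac{r^2 + t^2}{|r^2 - t^2|^{N+1}}$.

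To compute it, I would first exploit rotational invariance: for fixed $\tau$, the substitution $\sigma \mapsto R\sigma$, with $R$ a rotation taking $\tau$ to a fixed $\mathbf{e}$, shows that $\int_{S^{N-1}} |r\sigma - t\tau|^{-2N}\,d\sigma$ is independent of $\tau$, so the $\tau$-integration contributes just a factor $S_{N-1} = N\omega_N$, leaving
\[ N\omega_N \int_{S^{N-1}} \frac{d\sigma}{(r^2 + t^2 - 2rt\langle \sigma, \mathbf{e}\rangle)^N}, \]
where I used $|r\sigma - t\mathbf{e}|^2 = r^2 + t^2 - 2rt\langle\sigma,\mathbf{e}\rangle$. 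The main device for the remaining single-sphere integral is the Gamma integral representation $a^{-N} = \frac{1}{\Gamma(N)}\int_0^\infty h^{N-1} e^{-ah}\,dh$, valid for $a > 0$, applied with $a = r^2 + t^2 - 2rt\langle\sigma,\mathbf{e}\rangle$. After interchanging the $h$- and $\sigma$-integrals (Fubini, the integrand being nonnegative), the angular integral $\int_{S^{N-1}} e^{2rth\langle\sigma,\mathbf{e}\rangle}\,d\sigma$ is exactly the object evaluated in Lemma \ref{sphere} with $\beta = 2rth$, producing a modified Bessel function $I_{N/2-1}(2rth)$. The surviving integral in $h$ then has precisely the form of Lemma \ref{integralbessel2}.

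Collecting constants, I would combine the prefactor $\frac{2\pi^{N/2}}{\Gamma(N)}(rt)^{-(N/2-1)}$ coming from Lemma \ref{sphere} with the output $\frac{\Gamma(N)}{\Gamma(N/2)}(rt)^{N/2-1}\frac{r^2+t^2}{|r^2-t^2|^{N+1}}$ of Lemma \ref{integralbessel2}; the powers of $rt$ and the factors $\Gamma(N)$ cancel, leaving $\frac{2\pi^{N/2}}{\Gamma(N/2)}\frac{r^2+t^2}{|r^2-t^2|^{N+1}}$. Since $N\omega_N = \frac{2\pi^{N/2}}{\Gamma(N/2)}$, the single-sphere integral equals $N\omega_N\frac{r^2+t^2}{|r^2-t^2|^{N+1}}$, and multiplying by the earlier factor $N\omega_N$ gives the claimed value $(N\omega_N)^2\frac{r^2+t^2}{|r^2-t^2|^{N+1}}$ for the double-sphere integral. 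Substituting back into the expression above yields the formula.

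The point requiring care—and essentially the only obstacle—is the diagonal $r = t$, where $a = r^2+t^2-2rt\langle\sigma,\mathbf{e}\rangle$ vanishes for $\sigma = \mathbf{e}$ and the Gamma representation degenerates; correspondingly Lemma \ref{integralbessel2} is stated only for $r \neq t$. This is harmless, since $\{r = t\}$ is a null set in $(0,\infty)^2$, so the pointwise identity for the inner integral at $r \neq t$ suffices once one integrates in $(r,t)$. One should, however, carry out the Fubini interchange on the nonnegative integrand and note that the finiteness of $[u]_{W^{s,p}(\real^N)}$ is what guarantees the outer double integral is well defined. Both lemmas already encapsulate the genuinely delicate special-function computations, so no further analytic subtlety arises.
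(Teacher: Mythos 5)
Your proposal is correct and follows essentially the same route as the paper: both rest on the Gamma-function subordination $|x-y|^{-2N}=\frac{1}{\Gamma(N)}\int_0^{+\infty}h^{N-1}e^{-h|x-y|^2}\,dh$ combined with Lemma \ref{sphere} for the angular integral and Lemma \ref{integralbessel2} for the $h$-integral, and your constants check out. The only (immaterial) difference is the order of operations: the paper applies the subordination identity to the full integral over $\real^N\times\real^N$ before passing to polar coordinates, whereas you first reduce to the double-sphere integral and then subordinate.
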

\par \medskip
\begin{proof}

By \cite[Section 9.1]{almgrenlieb},
\begin{align*} \int_{\real^N} \int_{\real^N} \frac{|u(x)-u(y)|^p}{|x-y|^{N+s\,p}}\,dx\,dy & = \frac{1}{\Gamma(\frac{N+s\,p}{2})}\int_0^{+\infty} \left( \int_{\real^N} \int_{\real^N} |u(x)-u(y)|^p\,e^{-h\,|x-y|^2}\,dx\,dy\right) h^{\frac{N+s\,p}{2}-1}\,dh \\ & = \frac{1}{\Gamma(N)}\int_0^{+\infty} \left( \int_{\real^N} \int_{\real^N} |u(x)-u(y)|^p\,e^{-h\,|x-y|^2}\,dx\,dy\right) h^{N-1}\,dh.\end{align*}
If $\mathbf{e} \in \real^N$ is a unit vector, we have, thanks to Lemma \ref{sphere},
\begin{align*} & \int_{\real^N} \int_{\real^N} |u(x)-u(y)|^p\,e^{-h\,|x-y|^2}\,dx\,dy  \\ & = N\,\omega_N \int_0^{+\infty} r^{N-1}\,\left( \int_{\real^N} |u(r{\bf e})-u(y)|^p\,e^{-h\,|r{\bf e}-y|^2}\,dy \right)\,dr \\ & = N\,\omega_N \int_0^{+\infty} r^{N-1} \left( \int_{S^{N-1}} \left( \int_0^{+\infty} t^{N-1}\,|u(r)-u(t)|^p\,e^{-h(r^2+t^2-2rt\cos(\mathbf{e},\sigma))}\,dt\right)\,d\mathcal{H}^{N-1}(\sigma) \right)\,dr \\ & = N\,\omega_N \int_0^{+\infty} r^{N-1}  \left( \int_0^{+\infty} t^{N-1}\,|u(r)-u(t)|^p\,e^{-h(r^2+t^2)}\left(\int_{S^{N-1}} e^{2rth\cos(\mathbf{e},\sigma)}\,d\mathcal{H}^{N-1}(\sigma)\right)\,dt\right)\,dr \\ & = 2\pi^\frac{N}{2}N\,\omega_N \int_0^{+\infty} r^{N-1}  \left( \int_0^{+\infty} t^{N-1}\,|u(r)-u(t)|^p\,e^{-h(r^2+t^2)}\,(rth)^{1-\frac{N}{2}}\,I_{\frac{N}{2}-1}(2rth)\,dt\right)\,dr. \end{align*}
\par \noindent
Performing an integration in the variable $h$ and appealing to Lemma \ref{integralbessel2} we obtain
\begin{align*}
 \int_{\real^N} \int_{\real^N} \frac{|u(x)-u(y)|^p}{|x-y|^{N+s\,p}}\,dx\,dy = \frac{2\pi^\frac{N}{2}N\,\omega_N}{\Gamma(N/2)}\,  \int_0^{+\infty} \int_0^{+\infty} |u(r)-u(t)|^p\,r^{N-1}\,t^{N-1}\,\frac{r^2 + t^2}{|r^2-t^2|^{N+1}}\,dt\,dr.
\end{align*}
It now remains to observe that
\[ \frac{2\pi^\frac{N}{2}N\,\omega_N}{\Gamma(N/2)} = \frac{4\,\pi^{N}}{\Gamma(N/2)^2} = (N\omega_N)^2.\]
\end{proof}

\begin{rem}
Let $C(N):=(N\omega_N)^2$ be the constant appearing in \eqref{integraleradiale}. We have $C(2)=4\pi^2$, $C(3)=16\pi^2$, $C(4)=4\pi^4$, $C(5)=\frac{64}{9}\pi^4$.
\end{rem}

\section{Upper bound for the optimal exponent} \label{upper}

In order to give an upper bound to the optimal exponent $\overline{\alpha}$ such that
\begin{equation} \label{MTopt} \sup \left\{\int_\Omega \exp{\left(\alpha\,|u|^{\frac{N}{N-s}}\right)}\,\bigg|\,u \in \widetilde{W}^{s,p}_0(\Omega),\,[u]_{W^{s,p}(\real^N)}\leq 1 \right\}< + \infty \end{equation}
for $\alpha \in [0,\overline{\alpha})$, it is enough to consider the case where $\Omega$ is a ball. Indeed, let $B_r \subset \Omega$ be a ball of radius $r$, and let $\{u_k\}$ be a sequence in $\widetilde{W}^{s,p}_0(B_r)$ such that $[u]_{W^{s,p}(\real^N)} \leq 1$ and 
\[ \int_{B_r} \exp{\left(\alpha\,|u_k|^{\frac{N}{N-s}}\right)} \to + \infty \qquad \text{as }k \to +\infty.\]
For every $k$, let $\widetilde{u}_k \in \widetilde{W}^{s,p}_0(\Omega)$ the function defined by extending $u_k$ to zero outside $B_r$. It holds
\[ \int_\Omega \exp{\left(\alpha\,|\widetilde{u}_k|^{\frac{N}{N-s}}\right)} \geq \int_{B_r} \exp{\left(\alpha\,|u_k|^{\frac{N}{N-s}}\right)} \quad \text{and} \quad  [\widetilde{u}_k]_{W^{s,p}(\real^N)} = [u_k]_{W^{s,p}(\real^N)} \leq 1,\]
and therefore
\[ \int_\Omega \exp{\left(\alpha\,|\widetilde{u}_k|^{\frac{N}{N-s}}\right)} \to + \infty \qquad \text{as }k \to +\infty,\]
so that
\[ \sup \left\{\int_\Omega \exp{\left(\alpha\,|u|^{\frac{N}{N-s}}\right)}\,\bigg|\,u \in \widetilde{W}^{s,p}_0(\Omega),\,[u]_{W^{s,p}(\real^N)}\leq 1 \right\}= + \infty. \]
Moreover, by a simple scaling argument it is easy to see that the optimal exponent does not depend on the radius $r$.

Let us denote by $B$ the unit ball. We consider the family of functions defined by
\begin{equation} \label{maxsequence} u_\varepsilon(x)=\left\{\begin{array}{c l} |\ln \varepsilon|^{\frac{N-s}{N}} & \text{if }|x| \leq \varepsilon \vspace{0.2cm} \\ \displaystyle \frac{\big|\ln{|x|}\big|}{|\ln{\varepsilon}|^{\frac{s}{N}}} & \text{if }\varepsilon < |x| < 1 \vspace{0.2cm} \\ 
0 & \text{if } |x|\geq 1 \end{array}\right.\end{equation}
whose restrictions to $B$ belong to $\widetilde{W}^{s,p}_0(B)$. For $s=1$, this is the Moser-sequence used in \cite{moser}, which satisfies
\[ \|\nabla u_\varepsilon\|_2^2 = N\omega_N \quad \text{for every }\varepsilon > 0.\]
For $s \in (0,1)$, we cannot expect that $[u_\varepsilon]_{W^{s,p}(\real^N)}$ is constant, therefore it is important to compute the limit as $\varepsilon \to 0$ of the quantity 
\begin{align*} I(\varepsilon) & := \int_{\real^N} \int_{\real^N} \frac{|u_\varepsilon(x)-u_\varepsilon(y)|^p}{|x-y|^{N+sp}}\,dx\,dy \\ &= (N\omega_N)^2 \int_0^{+\infty}  \int_0^{+\infty} \,|u_\varepsilon(s)-u_\varepsilon(t)|^p \,r^{N-1}\,t^{N-1}\, \frac{r^2 + t^2}{|r^2-t^2|^{N+1}}\,dr\,dt\end{align*}
which we can decompose into $I(\varepsilon)=I_1(\varepsilon)+I_2(\varepsilon)+I_3(\varepsilon)+I_4(\varepsilon)$, where
\[ I_1(\varepsilon) = \frac{2}{|\ln{\varepsilon}|}\int_\varepsilon^1 \int_0^\varepsilon |\ln{r}-\ln{\varepsilon}|^p\,r^{N-1}\,t^{N-1}\, \frac{r^2 + t^2}{|r^2-t^2|^{N+1}}\,dr\,dt,\]
\[ I_2(\varepsilon) = \frac{1}{|\ln{\varepsilon}|}\int_\varepsilon^1 \int_\varepsilon^1 |\ln{r}-\ln{t}|^p\,r^{N-1}\,t^{N-1}\, \frac{r^2 + t^2}{|r^2-t^2|^{N+1}}\,dr\,dt, \]
\[ I_3(\varepsilon) =  2|\ln{\varepsilon}|^{p-1}\int_1^{+\infty} \int_0^\varepsilon r^{N-1}\,t^{N-1}\, \frac{r^2 + t^2}{|r^2-t^2|^{N+1}}\,dr\,dt, \]
\[ I_4(\varepsilon) = \frac{2}{|\ln{\varepsilon}|}\int_\varepsilon^1 \int_1^{+\infty} |\ln{r}|^p\,r^{N-1}\,t^{N-1}\, \frac{r^2 + t^2}{|r^2-t^2|^{N+1}}\,dr\,dt. \]
\par \smallskip \noindent
The following basic calculus fact will be extremely useful for the calculations:
\[ \frac{d}{dr}\left(\frac{1}{N} \frac{r^N}{(t^2-r^2)^N}\right) = \frac{r^{N-1}(t^2+r^2)}{(t^2-r^2)^{N+1}}. \]

\subsection{Computations for $I_1(\varepsilon)$}

\begin{align*}
 & \frac{2}{|\ln{\varepsilon}|}\int_\varepsilon^1 \int_0^\varepsilon |\ln{r}-\ln{\varepsilon}|^p\,r^{N-1}\,t^{N-1}\, \frac{r^2 + t^2}{|r^2-t^2|^{N+1}}\,dr\,dt \\ & = \frac{2}{|\ln{\varepsilon}|}\int_\varepsilon^1 |\ln{r}-\ln{\varepsilon}|^p\, r^{N-1}\left(\int_0^\varepsilon t^{N-1}\, \frac{r^2 + t^2}{|r^2-t^2|^{N+1}}\,dt\right)\,dr \\ & = \frac{2\varepsilon^N}{N|\ln{\varepsilon}|}\int_\varepsilon^1 \frac{|\ln{r}-\ln{\varepsilon}|^p}{(r-\varepsilon)^N}\, \frac{r^{N-1}}{(r+\varepsilon)^N}\,dr \\ & = \frac{2}{N|\ln{\varepsilon}|}\int_1^{\frac{1}{\varepsilon}} \frac{|\ln{x}|^p}{(x-1)^N}\, \frac{x^{N-1}}{(1+x)^N}\,dx
\end{align*}
where we applied the change of variable $x=\frac{r}{\varepsilon}$. Since $p>N$, the integral
\[ \int_1^{+\infty} \frac{|\ln{x}|^p}{(x-1)^N}\, \frac{x^{N-1}}{(1+x)^N}\,dx \]
is convergent and therefore $\lim_{\varepsilon \to 0} I_1(\varepsilon)=0$.

\subsection{Computations for $I_2(\varepsilon)$}

\begin{align*}
 & \frac{1}{|\ln{\varepsilon}|}\int_\varepsilon^1 \int_\varepsilon^1 |\ln{r}-\ln{t}|^p\,r^{N-1}\,t^{N-1}\, \frac{r^2 + t^2}{|r^2-t^2|^{N+1}}\,dr\,dt \\ & = \frac{1}{|\ln{\varepsilon}|}\int_\varepsilon^1 \frac{1}{t^2 } \left(\int_\varepsilon^1 \bigg|\ln{\frac{r}{t}}\bigg|^p\,\left(\frac{r}{t}\right)^{N-1}\, \frac{\left(\frac{r}{t}\right)^2 + 1}{|\left(\frac{r}{t}\right)^2-1|^{N+1}}\,dr\right)\,dt \\ & = \frac{1}{|\ln{\varepsilon}|}\int_\varepsilon^1 \frac{1}{t } \left(\int_{\frac{\varepsilon}{t}}^{\frac{1}{t}} |\ln{x}|^p\,x^{N-1}\, \frac{x^2 + 1}{|x^2-1|^{N+1}}\,dx\right)\,dt.
\end{align*}
After performing an integration by parts, the last quantity is equal to
\begin{align*}
 & \frac{1}{|\ln{\varepsilon}|}\int_\varepsilon^1 \frac{\ln{t}}{t^2} \bigg|\ln{\frac{1}{t}}\bigg|^p\,\frac{1}{t^{N-1}}\, \frac{\frac{1}{t^2} + 1}{|\frac{1}{t^2}-1|^{N+1}}\,dt-\frac{\varepsilon}{|\ln{\varepsilon}|}\int_\varepsilon^1 \frac{\ln{t}}{t^2}\,\bigg|\ln{\frac{\varepsilon}{t}}\bigg|^p\,\left(\frac{\varepsilon}{t}\right)^{N-1}\, \frac{\left(\frac{\varepsilon}{t}\right)^2 + 1}{|\left(\frac{\varepsilon}{t}\right)^2-1|^{N+1}}\,dt \\ & + \frac{1}{|\ln{\varepsilon}|}\left[ \ln{t} \left(\int_{\frac{\varepsilon}{t}}^{\frac{1}{t}} |\ln{x}|^p\,x^{N-1}\, \frac{x^2 + 1}{|x^2-1|^{N+1}}\,dx\right)\right]^{t=1}_{t=\varepsilon}.
\end{align*}
For $\varepsilon \to 0$, the first term converges to $0$, since the integral
\[ \int_0^1 \frac{\ln{t}}{t^2} \bigg|\ln{\frac{1}{t}}\bigg|^p\,\frac{1}{t^{N-1}}\, \frac{\frac{1}{t^2} + 1}{|\frac{1}{t^2}-1|^{N+1}}\,dt\]
is convergent. The second term is equal, after a change of variable, to
\begin{align*} & -\frac{1}{|\ln{\varepsilon}|}\int_\varepsilon^1 \ln{\left(\frac{\varepsilon}{x} \right)}\,|\ln{x}|^p\,x^{N-1}\, \frac{x^2 + 1}{|x^2-1|^{N+1}}\,dx \\ & = \int_\varepsilon^1 |\ln{x}|^p\,x^{N-1}\, \frac{x^2 + 1}{|x^2-1|^{N+1}}\,dx - \frac{1}{|\ln{\varepsilon}|}\int_\varepsilon^1 |\ln{x}|^{p+1}\,x^{N-1}\, \frac{x^2 + 1}{|x^2-1|^{N+1}}\,dx \end{align*}
and it converges to
\[ \int_0^1 |\ln{x}|^p\,x^{N-1}\, \frac{x^2 + 1}{|x^2-1|^{N+1}}\,dx = \int_1^{+\infty} |\ln{x}|^p\,x^{N-1}\, \frac{x^2 + 1}{|x^2-1|^{N+1}}\,dx\]
as $\varepsilon \to 0$. The third term is equal to
\[ \int_{1}^{\frac{1}{\varepsilon}} |\ln{x}|^p\,x^{N-1}\, \frac{x^2 + 1}{|x^2-1|^{N+1}}\,dx \]
so that finally
\[ \lim_{\varepsilon \to 0} I_2(\varepsilon) = 2\int_{1}^{+\infty} |\ln{x}|^p\,x^{N-1}\, \frac{x^2 + 1}{|x^2-1|^{N+1}}\,dx.\]

\subsection{Computations for $I_3(\varepsilon)$}
\begin{align*}
 & 2|\ln{\varepsilon}|^{p-1}\int_1^{+\infty} \int_0^\varepsilon r^{N-1}\,t^{N-1}\, \frac{r^2 + t^2}{|r^2-t^2|^{N+1}}\,dr\,dt \\ & = 2|\ln{\varepsilon}|^{p-1}\int_1^{+\infty}  r^{N-1}\, \left(\int_0^\varepsilon  t^{N-1}\, \frac{r^2 + t^2}{|r^2-t^2|^{N+1}}\,dt\right)\,dr \\ & = \frac{2\varepsilon^N|\ln{\varepsilon}|^{p-1}}{N}\int_1^{+\infty} \frac{r^{N-1}}{(r^2-\varepsilon^2)^N}\,dr
\end{align*}
and therefore $\lim_{\varepsilon \to 0} I_3(\varepsilon) \to 0$.

\subsection{Computations for $I_4(\varepsilon)$}
\begin{align*}
 & \frac{2}{|\ln{\varepsilon}|}\int_\varepsilon^1 \int_1^{+\infty} |\ln{r}|^p\,r^{N-1}\,t^{N-1}\, \frac{r^2 + t^2}{|r^2-t^2|^{N+1}}\,dr\,dt \\ & = \frac{2}{|\ln{\varepsilon}|}\int_\varepsilon^1 |\ln{r}|^p\,r^{N-1} \left( \int_1^{+\infty} t^{N-1}\, \frac{r^2 + t^2}{|r^2-t^2|^{N+1}}\,dt\right)\,dr \\ & = \frac{2}{N|\ln{\varepsilon}|}\int_\varepsilon^1 |\ln{r}|^p\,\frac{r^{N-1}}{(1-r^2)^N}\,dr.
\end{align*}
Therefore $\lim_{\varepsilon \to 0} I_4(\varepsilon) = 0$, since the integral
\[ \int_0^1 |\ln{r}|^p\,\frac{r^{N-1}}{(1-r^2)^N}\,dr \]
is finite (the integrand function is bounded since $p>N$).

\subsection{Computation of the integral} \label{computationintegral}

The value of the integral
\[ \int_{1}^{+\infty} |\ln{x}|^p\,x^{N-1}\, \frac{x^2 + 1}{(x^2-1)^{N+1}}\,dx \]
can be computed explicitly. To this aim, we write
\[ \int_{1}^{+\infty} |\ln{x}|^p\,x^{N-1}\, \frac{x^2 + 1}{(x^2-1)^{N+1}}\,dx = \frac{p}{N}\int_{1}^{+\infty} |\ln{x}|^{p-1}\,\frac{x^{N-1}}{(x^2-1)^{N}}\,dx = \frac{p}{N}\int_{0}^{1} |\ln{t}|^{p-1}\,\frac{t^{N-1}}{(1-t^2)^{N}}\,dt. \]
For $p \in \mathbb{N}$, this can be found for instance in \cite[2.6.5.1, p. 490]{prudnikov}. Otherwise, for a generic $p \in \real^+$, we write
\[ \frac{1}{(1-x^2)^N} = \sum_{k=0}^\infty \frac{(N+k-1)!}{k!\,(N-1)!}x^{2k}  \]
(see \cite[5.2.2.10, p. 697]{prudnikov}). According to \cite[2.6.3.1, p. 488]{prudnikov}
\begin{align*} \int_{0}^{1} |\ln{t}|^{p-1}\,\frac{t^{N-1}}{(1-t^2)^{N}}\,dt & = \int_{0}^{1} |\ln{t}|^{p-1}\,\sum_{k=0}^\infty \frac{(N+k-1)!}{k!\,(N-1)!}t^{N+2k-1}\,dt \\ & = \sum_{k=0}^\infty \frac{(N+k-1)!}{k!\,(N-1)!}\left(\int_{0}^{1} |\ln{t}|^{p-1}\, t^{N+2k-1}\,dt\right) \\ & = \frac{\Gamma(p)}{(N-1)!} \sum_{k=0}^\infty \frac{(N+k-1)!}{k!}\frac{1}{(N+2k)^p}.\end{align*}
Let us compute the last quantity for some values of $N$.
For $N=2$, we have
\[ \Gamma(p)\sum_{k=0}^\infty \frac{(k+1)!}{k!}\frac{1}{(2+2k)^p} = \frac{\Gamma(p)}{2^p} \sum_{k=0}^\infty \frac{1}{(k+1)^{p-1}} = \frac{\Gamma(p)}{2^p} \sum_{k=1}^\infty \frac{1}{k^{p-1}} = \frac{\Gamma(p)}{2^p}\zeta(p-1).  \]
For $N=3$, we have
\begin{align*} \frac{\Gamma(p)}{2}\sum_{k=0}^\infty \frac{(k+2)!}{k!}\frac{1}{(3+2k)^p} & = \frac{\Gamma(p)}{2^{p+1}} \sum_{k=0}^\infty \frac{k^2+k}{\left(k+\frac{1}{2}\right)^{p}} = \frac{\Gamma(p)}{2^{p+1}} \sum_{k=0}^\infty \left[\frac{1}{\left(k+\frac{1}{2}\right)^{p-2}} - \frac{1}{4} \frac{1}{\left(k+\frac{1}{2}\right)^{p}}\right] \\ & = \frac{\Gamma(p)}{2^{p+1}}\left[\zeta\left(p-2,\frac{1}{2}\right)-\frac{1}{4}\zeta\left(p,\frac{1}{2}\right)\right]\\ &  = \frac{\Gamma(p)}{2^{p+1}}\left[(2^{p-2}-1)\zeta(p-2) - 2^{-2}(2^p-1)\zeta(p)\right] \end{align*}
where $\zeta(s,q)$ is Hurwitz zeta function.
For $N=4$, we have
\begin{align*} \frac{\Gamma(p)}{6}\sum_{k=0}^\infty \frac{(k+3)!}{k!}\frac{1}{(4+2k)^p} & = \frac{\Gamma(p)}{3 \cdot 2^{p+1}} \sum_{k=0}^\infty \frac{k^2+4k+3}{(k+2)^{p-1}} \\ & = \frac{\Gamma(p)}{3 \cdot 2^{p+1}} \sum_{k=0}^\infty \left[\frac{1}{(k+2)^{p-3}} -  \frac{1}{(k+2)^{p-1}}\right] \\ & = \frac{\Gamma(p)}{3 \cdot 2^{p+1}}[\zeta(p-3)-\zeta(p-1)]. \end{align*}

In general, for $N=2m$ even, we can write
\[ \frac{(N+k-1)!}{k!} = \prod_{j=1}^{N-1} (k+j) = (k+m)^{N-1} + \sum_{i=2}^{N-2} a_i(k+m)^{N-i}\]
so that
\begin{align*} \sum_{k=0}^\infty \frac{(N+k-1)!}{k!}\frac{1}{(N+2k)^p} & = \frac{1}{2^p}\sum_{k=0}^\infty \frac{(N+k-1)!}{k!}\frac{1}{(k+m)^p} \\ & = \frac{1}{2^p}\sum_{k=0}^\infty \left[\frac{1}{(k+m)^{p+1-N}} + \sum_{i=2}^{N-1} \frac{a_i}{(k+m)^{p+i-N}}\right] \\ & = \frac{1}{2^p} \left[\zeta(p+1-N) + \sum_{i=2}^{N-1} a_i\zeta(p+i-N) + c \right]\end{align*}
and thus
\[ \lim_{p \to N} (p-N)\sum_{k=0}^\infty \frac{(N+k-1)!}{k!}\frac{1}{(N+2k)^p} = \frac{1}{2^N}. \]
For $N=2m+1$ odd,
we can write
\[ \frac{(N+k-1)!}{k!} = \prod_{j=1}^{N-1} (k+j) = \left(k+\frac{N}{2}\right)^{N-1} + \sum_{i=2}^{N-2} a_i\left(k+\frac{N}{2}\right)^{N-i}\]
so that
\begin{align*} \sum_{k=0}^\infty \frac{(N+k-1)!}{k!}\frac{1}{(N+2k)^p} & = \frac{1}{2^p}\sum_{k=0}^\infty \frac{(N+k-1)!}{k!}\frac{1}{\left(k+\frac{N}{2}\right)^p} \\ & = \frac{1}{2^p}\sum_{k=0}^\infty \left[\frac{1}{\left(k+\frac{N}{2}\right)^{p+1-N}} + \sum_{i=2}^{N-1} \frac{a_i}{\left(k+\frac{N}{2}\right)^{p+i-N}}\right] \\ & = \frac{1}{2^p} \left[\zeta\left(p+1-N,\frac{1}{2}\right) + \sum_{i=2}^{N-1} a_i\zeta\left(p+i-N,\frac{1}{2}\right) + c \right]\end{align*}
and again, the leading term is $\zeta\left(p+1-N,\frac{1}{2}\right) = (2^{p+1-N}-1)\zeta(p+1-N)$, so that
\[ \lim_{p \to N} (p-N)\sum_{k=0}^\infty \frac{(N+k-1)!}{k!}\frac{1}{(N+2k)^p} = \frac{1}{2^N}. \]
We can summarize the results we found in the following proposition.

\begin{prop}
Let $\{u_\varepsilon\}$ be the family of functions in ${\widetilde W}^{s,p}_0(B)$ ($sp=N$) defined in \eqref{maxsequence}. Then,
\[ \lim_{\varepsilon \to 0} [u_\varepsilon]_{W^{s,p}(\real^N)}^p = \gamma_{s,N} := \frac{2\,(N\omega_N)^2\,\Gamma(p+1)}{N!}\,\sum_{k=0}^\infty \frac{(N+k-1)!}{k!}\frac{1}{(N+2k)^p}.\]
Moreover,
\[ \lim_{s \to 1^-} (1-s)\gamma_{s,N} = \lim_{p \to N} \frac{p-N}{p}\gamma_{s,N} = \frac{N\omega_N^2}{2^{N-1}}.\]
\end{prop}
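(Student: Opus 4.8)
The plan is to assemble the four subsection computations into the first identity, and then to track the behaviour of the resulting expression as $p\to N^{+}$ for the second. For the first identity I would record that, by the radial seminorm formula \eqref{integraleradiale}, one has $[u_\varepsilon]_{W^{s,p}(\real^N)}^p=I(\varepsilon)$. Splitting the region of integration $(0,+\infty)^2$ according to the three regimes of $u_\varepsilon$ in \eqref{maxsequence} — the constant core $\{r\le\varepsilon\}$, the logarithmic annulus $\{\varepsilon<r<1\}$, and the exterior $\{r\ge1\}$ where $u_\varepsilon\equiv0$ — and exploiting the symmetry of the kernel in $(r,t)$, the surviving contributions are exactly $I_1,I_2,I_3,I_4$: the pairs with both radii in the core or both in the exterior vanish since $u_\varepsilon$ is constant there, while the prefactors $\tfrac{2}{|\ln\varepsilon|}$, $\tfrac{1}{|\ln\varepsilon|}$, $2|\ln\varepsilon|^{p-1}$ arise from evaluating $|u_\varepsilon(r)-u_\varepsilon(t)|^p$ on each cross region and using $sp=N$. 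The subsection limits $\lim_{\varepsilon\to0}I_1=\lim_{\varepsilon\to0}I_3=\lim_{\varepsilon\to0}I_4=0$ and $\lim_{\varepsilon\to0}I_2=2\int_1^{+\infty}|\ln x|^p x^{N-1}\tfrac{x^2+1}{(x^2-1)^{N+1}}\,dx$, together with the explicit evaluation of this integral as $\tfrac{\Gamma(p+1)}{N!}\sum_{k\ge0}\tfrac{(N+k-1)!}{k!}(N+2k)^{-p}$, then give $\lim_{\varepsilon\to0}[u_\varepsilon]_{W^{s,p}(\real^N)}^p=\gamma_{s,N}$ once the constant $(N\omega_N)^2$ is reinstated.

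For the second identity I would use the constraint $sp=N$, i.e. $s=N/p$ and $1-s=(p-N)/p$, so that
\[ (1-s)\,\gamma_{s,N}=\frac{p-N}{p}\cdot\frac{2\,(N\omega_N)^2\,\Gamma(p+1)}{N!}\,S(p),\qquad S(p):=\sum_{k=0}^{\infty}\frac{(N+k-1)!}{k!}\frac{1}{(N+2k)^p}. \]
As $p\to N$ one has $\Gamma(p+1)\to N!$ and $\tfrac1p\to\tfrac1N$, so everything reduces to the product $(p-N)\,S(p)$. The required value $\lim_{p\to N}(p-N)\,S(p)=2^{-N}$ is the crux; I would obtain it as in the computation subsection, by writing $\tfrac{(N+k-1)!}{k!}=\prod_{j=1}^{N-1}(k+j)$ as a polynomial in $k+\tfrac{N}{2}$ and using $N+2k=2(k+\tfrac{N}{2})$ to express $S(p)$ as a finite combination of zeta functions, the only term singular at $p=N$ being $\zeta(p+1-N)$ (for $N$ odd it enters through $\zeta(p+1-N,\tfrac12)=(2^{p+1-N}-1)\zeta(p+1-N)$ by \eqref{hurwitz}). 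The simple pole $\lim_{s\to1}(s-1)\zeta(s)=1$ from \eqref{limitezeta} then isolates the residue, and the prefactor $2^{-p}\to2^{-N}$ yields the limit. Substituting,
\[ \lim_{s\to1^-}(1-s)\,\gamma_{s,N}=\frac{2\,(N\omega_N)^2}{N}\cdot\frac{1}{2^N}=\frac{N\,\omega_N^2}{2^{N-1}}, \]
using $(N\omega_N)^2=N^2\omega_N^2$.

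I expect the main obstacle to be exactly this extraction of $\lim_{p\to N}(p-N)\,S(p)=2^{-N}$. One must verify that every lower-order zeta term $\zeta(p+i-N)$ with $i\ge2$, together with the constant $c$ produced by the low-index correction of the polynomial expansion, stays bounded as $p\to N$, so that multiplication by $(p-N)$ annihilates them all and leaves only the residue of the single pole at $p=N$. The even and odd cases have to be separated, because the shift $k+\tfrac{N}{2}$ is integral in one case and half-integral in the other, which decides whether ordinary or half-integer Hurwitz zeta values occur, the latter being reduced to $\zeta(p+1-N)$ through \eqref{hurwitz}.
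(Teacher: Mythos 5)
Your proposal is correct and follows the paper's own route essentially verbatim: the paper obtains the first identity by exactly this decomposition of the radial double integral into $I_1,\dots,I_4$ (with only $I_2$ surviving in the limit) followed by the series evaluation of $\int_1^{+\infty}|\ln x|^p x^{N-1}\frac{x^2+1}{(x^2-1)^{N+1}}\,dx$, and the second identity by the same even/odd expansion of $\frac{(N+k-1)!}{k!}$ in powers of $k+\frac{N}{2}$, isolating the simple pole of $\zeta(p+1-N)$ (via the Hurwitz reduction \eqref{hurwitz} in the odd case). The point you flag as the crux — that only the leading zeta term is singular at $p=N$ while the lower-order terms and the finite correction $c$ are annihilated by the factor $p-N$ — is precisely how the paper concludes.
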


We are now ready to prove the main result of this section.

\begin{prop}
Let $\Omega$ be a bounded, open domain of $\real^N$, and let $sp = N$. There exists $\alpha^*_{s,N} := N(\gamma_{s,N})^\frac{s}{N-s}$ such that
\[ \sup \left\{\int_\Omega \exp{\left(\alpha\,|u|^{\frac{N}{N-s}}\right)}\,\bigg|\,u \in \widetilde{W}^{s,p}_0(\Omega),\,[u]_{W^{s,p}(\real^N)}\leq 1 \right\} = + \infty \qquad \text{for } \alpha \in (\alpha^*_{s,N}, +\infty).\]
\end{prop}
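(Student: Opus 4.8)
The plan is to exploit the family $\{u_\varepsilon\}$ from \eqref{maxsequence}, suitably normalized, as an explicit sequence of admissible competitors which forces the integral to blow up. Since the reduction carried out at the beginning of this section lets us assume $\Omega=B$ is the unit ball, and since the optimal exponent is scale-invariant, it suffices to produce, for each fixed $\alpha>\alpha^*_{s,N}$, a sequence $\{v_\varepsilon\}\subset\widetilde{W}^{s,p}_0(B)$ with $[v_\varepsilon]_{W^{s,p}(\real^N)}\le 1$ along which $\int_B\exp(\alpha\,|v_\varepsilon|^{N/(N-s)})\to+\infty$. I would set $v_\varepsilon:=u_\varepsilon/[u_\varepsilon]_{W^{s,p}(\real^N)}$, so that $[v_\varepsilon]_{W^{s,p}(\real^N)}=1$ and each $v_\varepsilon$ is admissible; by the preceding proposition the normalizing constants satisfy $[u_\varepsilon]_{W^{s,p}(\real^N)}^p\to\gamma_{s,N}$, a finite, strictly positive number.

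Next I would discard everything except the central ball $\{|x|\le\varepsilon\}$, on which $u_\varepsilon$ is the constant $|\ln\varepsilon|^{(N-s)/N}$, so that $v_\varepsilon$ is constant there too. Using that the integrand is nonnegative and that $|u_\varepsilon|^{N/(N-s)}=|\ln\varepsilon|$ on this ball, together with its volume $\omega_N\varepsilon^N$,
\[ \int_B \exp\!\left(\alpha\,|v_\varepsilon|^{\frac{N}{N-s}}\right) \ge \int_{\{|x|\le\varepsilon\}} \exp\!\left(\alpha\,|v_\varepsilon|^{\frac{N}{N-s}}\right) = \omega_N\,\varepsilon^N\,\exp\!\left(\frac{\alpha\,|\ln\varepsilon|}{[u_\varepsilon]_{W^{s,p}(\real^N)}^{N/(N-s)}}\right). \]
Writing $\exp(c\,|\ln\varepsilon|)=\varepsilon^{-c}$, the right-hand side equals $\omega_N\,\varepsilon^{\,N-\alpha/[u_\varepsilon]_{W^{s,p}(\real^N)}^{N/(N-s)}}$.

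Finally, I would let $\varepsilon\to0$ and identify the threshold. Because $sp=N$ gives $p=N/s$, one checks $\tfrac{N}{(N-s)p}=\tfrac{s}{N-s}$, so from $[u_\varepsilon]_{W^{s,p}(\real^N)}^p\to\gamma_{s,N}$ we obtain $[u_\varepsilon]_{W^{s,p}(\real^N)}^{N/(N-s)}\to\gamma_{s,N}^{s/(N-s)}$. For fixed $\alpha>\alpha^*_{s,N}=N\,\gamma_{s,N}^{s/(N-s)}$ this yields $\alpha/\gamma_{s,N}^{s/(N-s)}>N$, so by continuity there exist $\delta>0$ and $\varepsilon_0>0$ with $N-\alpha/[u_\varepsilon]_{W^{s,p}(\real^N)}^{N/(N-s)}<-\delta$ for all $\varepsilon<\varepsilon_0$. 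The lower bound is then at least $\omega_N\,\varepsilon^{-\delta}\to+\infty$, whence the supremum is $+\infty$, as claimed.

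The computational heart of the argument---the evaluation of $\gamma_{s,N}$ via the radial Gagliardo-seminorm formula \eqref{integraleradiale} and the splitting $I(\varepsilon)=I_1(\varepsilon)+I_2(\varepsilon)+I_3(\varepsilon)+I_4(\varepsilon)$---has already been carried out in the preceding propositions, so the only remaining care here is to justify retaining a single piece of the integral (the central ball) and to use that the normalizing constant converges to a finite, nonzero limit. I do not expect a genuine obstacle beyond this bookkeeping once $\gamma_{s,N}$ is in hand.
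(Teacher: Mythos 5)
Your proposal is correct and follows essentially the same route as the paper: normalize $u_\varepsilon$ by its Gagliardo seminorm, restrict the integral to the central ball $B_\varepsilon$ where the function is constant, and use $[u_\varepsilon]_{W^{s,p}(\real^N)}^{N/(N-s)}\to\gamma_{s,N}^{s/(N-s)}$ to show the lower bound $\omega_N\,\varepsilon^{N-\alpha/[u_\varepsilon]^{N/(N-s)}}$ blows up when $\alpha>N\gamma_{s,N}^{s/(N-s)}$. The only difference is cosmetic: the paper fixes a constant $\beta>N$ with $\alpha\,[u_\varepsilon]^{-N/(N-s)}\ge\beta$ for small $\varepsilon$, which plays exactly the role of your $\delta$.
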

\begin{proof}
As we discussed before, it is enough to consider the case $\Omega=B$. Let $u_\varepsilon$ be the concentrating family defined in \eqref{maxsequence}, which satisfies $[u_\varepsilon]_{W^{s,p}(\real^N)} \to (\gamma_{s,N})^{\frac{1}{p}}$ as $\varepsilon \to 0$. Let $\alpha > N(\gamma_{s,N})^\frac{s}{N-s}$. For $\varepsilon$ near to $0$, we will have $\alpha\,[u_\varepsilon]_{W^{s,p}(\real^N)}^{-\frac{N}{N-s}} \geq \beta > N$. Set $v_\varepsilon := \frac{1}{[u_\varepsilon]_{W^{s,p}(\real^N)}}u_\varepsilon$. Then
\begin{align*} \int_{B_1} \exp{\left(\alpha |v_\varepsilon|^{\frac{N}{N-s}}\right)} \geq \int_{B_\varepsilon} \exp{\left(\alpha |v_\varepsilon|^{\frac{N}{N-s}}\right)} \geq \int_{B_\varepsilon} \exp{(-\beta\ln{\varepsilon})}  = \omega_N\,\varepsilon^{N-\beta} \to + \infty\end{align*}
as $\varepsilon \to 0$, therefore proving the claim.
\end{proof}

\begin{rem}
It is also interesting to observe that, for $\alpha < N(\gamma_{s,N})^\frac{s}{N-s}$, the quantity
\[ \int_{B} \exp{\left(\alpha |v_\varepsilon|^{\frac{N}{N-s}}\right)} \leq \int_{B}  \exp{\left(\beta |u_\varepsilon|^{\frac{N}{N-s}}\right)} \]
where $\alpha\,[u_\varepsilon]_{W^{s,p}(\real^N)}^{-\frac{N}{N-s}} \leq \beta < N$, is uniformly bounded. This amounts to prove that the limit
\[ \lim_{\varepsilon \to 0} \int_\varepsilon^1 \exp{\left(\beta \frac{|\ln{t}|^{\frac{N}{N-s}}}{|\ln{\varepsilon}|^{\frac{s}{N-s}}}\right)\,t^{N-1}\,dt} \]
is finite. In fact we have
\[ \exp{\left(\beta \frac{|\ln{t}|^{\frac{N}{N-s}}}{|\ln{\varepsilon}|^{\frac{s}{N-s}}}\right)}\,t^{N-1} \leq \exp{\left(\beta \frac{|\ln{t}|^{\frac{N}{N-s}}}{|\ln{t}|^{\frac{s}{N-s}}}\right)}\,t^{N-1} \leq t^{N-1-\beta}\]
and
\[ \int_\varepsilon^1 t^{N-1-\beta}\,dt = \frac{1}{N-\beta}\left(1-\varepsilon^{N-\beta}\right) \to \frac{1}{N-\beta}\]
as $\varepsilon \to 0$.
\end{rem}

\begin{rem}
We observe that the results obtained are consistent with the local case. For example, if $N=2$, thanks to the computations of Section \ref{computationintegral} we have
\[ \lim_{s \to 1^-} (1-s)\alpha^*_{s,2} = 2\pi^2\]
which coincides with the optimal exponent $\alpha^*_{1,2}=4\pi$ (see \cite{moser}), up to the multiplicative constant
\[ K(2,2):= \frac{1}{2}\int_{S^1} |\langle \sigma,\mathbf{e}\rangle|^2\,d\mathcal{H}^{N-1}(\sigma) = \frac{\pi}{2} \]
which appears in the asymptotic behaviour of Gagliardo seminorms in the limit $s \to 1^-$ (see \eqref{essea1}).

%and denote by $v_{s,\rho}$ the corresponding functions in \eqref{maxsequence}. They satisfy
% \[ [v_{s,\rho}]_{W^{s,p}(\real^N)}^p \to \frac{p}{2^{p-2}}\pi^2 \Gamma(p) \zeta(p-1)\]
% as $\rho \to 0$, while for $p=N=2$,
% \[ \|\nabla v_{1,\rho}\|_2^2 = 2\pi. \]
% As $s \to 1^-$ (namely $p \to 2$), we have
% \[ (1-s)\, \frac{p}{2^{p-2}}\pi^2 \Gamma(p) \zeta(p-1) = \left(\frac{p-2}{p} \right)\frac{p}{2^{p-2}}\pi^2 \Gamma(p)\zeta(p-1) \to \pi^2. \]
% In other words,
% \[ \lim_{s \to 1^-} \lim_{\rho \to 0} (1-s)[v_{s,\rho}]_{W^{s,N/s}(\real^N)}^p =  \lim_{\rho \to 0} \lim_{s \to 1^-} K(2,2)\,\|\nabla v_{s,\rho}\|_2^2,\]
% where
% \[ K(2,2) = \frac{1}{2}\int_{S^1} |\langle \sigma,\mathbf{e}\rangle|^2\,d\mathcal{H}^{N-1}(\sigma) = \frac{\pi}{2}.\]
% This is coherent with the result stated in \eqref{essea1}, even if that result holds true for \emph{fixed} $p$ and varying $s$. One may wonder if, for a family of functions $\{u_s\}$ in $W^{1,N}_0(\Omega)$ converging in $L^N(\Omega)$ to a function $u \in W^{1,N}_0(\Omega)$, it always holds
% \[ \lim_{s \to 1^-} (1-s)[u_s]_{W^{s,N/s}(\real^N)}^{\frac{N}{s}} = K(N,N)\|\nabla u\|_N^N,\]
% giving an extension of \cite[Theorem 3.1]{brascoparinisquassina} which states that
% \[ \lim_{s \to 1^-} (1-s)[u_s]_{W^{s,N}(\real^N)}^{N} = K(N,N)\|\nabla u\|_N^N.\]
% (observe that $W^{1,N}_0(\Omega) \hookrightarrow {\widetilde W}^{s,N/s}_0(\Omega)$ according to \cite[Theorem 1.4.4.1]{grisvard}).
\end{rem}

\section{Conclusions and final remarks}

The present work is a first step towards the understanding of the Moser-Trudinger inequality in fractional Sobolev-Slobodeckij spaces. However, we are left with many open questions. Let $\alpha^*_{opt}$ be the supremum of all $\alpha > 0$ such that the Moser-Trudinger inequality holds true. The first question concerns of course the optimality of the exponent $\alpha^*_{s,N}$ of Section \ref{upper}: does it hold $\alpha^*_{s,N}=\alpha^*_{opt}$? Moreover, does the Moser-Trudinger inequality hold true also for $\alpha=\alpha^*_{opt}$, as in the classical case? And if this is the case, is the supremum attained, similarly to the results of \cite{carlesonchang} and \cite{lin}?

Once the optimal exponent is determined, it would be interesting to compare it with its counterpart for the Bessel potential spaces $\widetilde{H}^{s,p}(\Omega)$ defined in \cite{martinazzi}. Since $\widetilde{H}^{s,2}(\Omega) = \widetilde{W}^{s,2}_0(\Omega)$, the reader might wonder whether $\alpha^*_{s,N}$ coincides with the optimal exponent for $\widetilde{H}^{s,2}(\Omega)$. Although we restricted the analysis to the case $N \geq 2$, it is interesting to observe that, for $s=\frac{1}{2}$ and $N=1$, our exponent $\alpha^*_{\frac{1}{2},1}$ is equal to $2\pi^2$ and, up to a normalization constant due to the fact that
\[ [u]_{W^{\frac{1}{2},2}(\real)}^2=2\pi [u]_{H^{\frac{1}{2},2}(\real)}^2 \]
(see \cite[Proposition 3.6]{dinezzapalatuccivaldinoci}), it coincides with the optimal exponent $\alpha_2=\pi$ determined in \cite{iulamaalaouimartinazzi} for the space $\widetilde{H}^{\frac{1}{2},2}(I)$. 
Another possible research direction could be the investigation of whether a generalized inequality of the type
\[ \sup \left\{\int_\Omega f(|u|)\,\exp{\left(\alpha\,|u|^{\frac{N}{N-s}}\right)}\,\bigg|\,u \in \widetilde{W}^{s,p}_0(\Omega),\,[u]_{W^{s,p}(\real^N)}\leq 1 \right\}< + \infty, \]
where $f:\real^+\to \real^+$ is such that $f(t)\to +\infty$ as $t \to +\infty$, holds true for the same exponents of the standard Moser-Trudinger inequality. In the case of Bessel potential spaces, this was investigated in \cite{iulamaalaouimartinazzi} for $N=1$, and in \cite{hyder} for $N \geq 2$.

\end{document}